\documentclass[12pt]{article}

\usepackage{amsfonts}
\usepackage[top=2.75cm, bottom=2.75cm, left=2.75cm, right=2.75cm]{geometry}
\usepackage{epsfig,amsmath,amssymb,amsthm,mathrsfs}
\usepackage[latin1]{inputenc}
\usepackage[english]{babel}
\usepackage[T1]{fontenc}
\usepackage{dcolumn}
\usepackage{bm}

\newtheorem{prop}{Proposition}
\newtheorem{cor}[prop]{Corollary}
\newtheorem{lem}[prop]{Lemma}

\newtheorem{dfn}[prop]{Definition}

\newtheorem{thm}[prop]{Theorem}

\newcommand{\fg}{\mbox{${\mathfrak g}$}}
\newcommand{\Gs}{\mbox{$G^{*}$}}

\newcommand{\fh}{\mbox{${\mathfrak h}$}}

\newcommand{\la}{\mbox{$\mathfrak g$}}

\newcommand{\cx}{\mbox{$X^{r}$}}
\newcommand{\onf}{\mbox{$1$-form}}

\newcommand{\ga}{\gamma}
\newcommand{\ul}{\underline}
\newcommand{\og}{{\overline g}}
\newcommand{\oga}{{\overline \gamma}}



\def\Q1{{\bf Q}_1}

\def\pg{p_{\mathfrak{g}}}
\def\pgs{p_{\mathfrak{g}^\star}}
\def\Ad{{\rm Ad}}

\def\ph{{p_H}}
\def\php{{p_{H^\perp}}}
\def\JH{{J_{\mbox{\tiny H}}}}
\def\h{{\mathfrak h}}

\def\sharpplus{{\pi_+^\sharp}}
\def\G{{\cal G}}

\def\pruno{{\rm pr_1}}
\def\prdue{{\rm pr_2}}
\def\alphaG{{\alpha_{\mbox{\tiny G}}}}
\def\betaG{{\beta_{\mbox{\tiny G}}}}
\def\alphaGs{{\alpha_{\mbox{\tiny G}^*}}}
\def\betaGs{{\beta_{\mbox{\tiny G}^*}}}
\def\alphaGsHp{{\alpha_{\mbox{\tiny G}^*}^{\mbox{\tiny H}^\perp}}}
\def\betaGsHp{{\beta_{\mbox{\tiny G}^*}^{\mbox{\tiny H}^\perp}}}
\def\alphaGH{{\alpha_{\mbox{\tiny G}}^{\mbox{\tiny H}}}}
\def\betaGH{{\beta_{\mbox{\tiny G}}^{\mbox{\tiny H}}}}
\def\mGsHp{{{\rm m}_{\mbox{\tiny G}^*}^{\mbox{\tiny H}^\perp}}}
\def\mGH{{{\rm m}_{\mbox{\tiny G}}^{\mbox{\tiny H}}}}
\def\prGuno{{{\rm pr}_{\G 1}}}
\def\prGdue{{{\rm pr}_{\G 2}}}
\def\prGHuno{{{\rm pr}_{\mbox{\tiny G}1}^{\mbox{\tiny H}}}}
\def\prGHdue{{{\rm pr}_{\mbox{\tiny G}2}^{\mbox{\tiny H}}}}
\def\tG{{\tilde G}}

\begin{document}
\thispagestyle{empty}

\smallskip
\begin{center} \LARGE
{\bf On the integration of \\ Poisson homogeneous spaces}
 \\[12mm] \normalsize
{\large\bf Francesco Bonechi$^{a}$, Nicola Ciccoli $^{b}$, Nicola Staffolani $^{c}$ and Marco Tarlini$^{a}$} \\[8mm]
 {\small\it
$^a$I.N.F.N. Sezione di Firenze,\\
  Via G. Sansone 1, 50019 Sesto Fiorentino - Firenze, Italy \\
  email: $\langle name \rangle$ @fi.infn.it
~\\
$^b$Dipartimento di Matematica e Informatica,\\
Via Vanvitelli 1, 06123 - Perugia, Italy\\
email: ciccoli@dipmat.unipg.it
~\\
$^c$Dipartimento di Fisica,\\
  Via G. Sansone 1, 50019 Sesto Fiorentino - Firenze, Italy \\
  email: staffolani@fi.infn.it
}
\end{center}
\vspace{10mm}
\centerline{\bfseries Abstract} We study a reduction procedure for describing the symplectic groupoid of a
Poisson homogeneous space obtained by quotient of a coisotropic subgroup. We perform it as a reduction of the Lu-Weinstein symplectic groupoid integrating Poisson Lie groups, that is suitable even for the non-complete case.  

\vspace{3.5cm}

\noindent {\bf Keywords:} Poisson geometry, Symplectic groupoids, Poisson homogeneous spaces, 
Poisson-Lie groups, Coisotropic subgroups, Geometric quantization.

\noindent {\bf MSC:}  53D05, 17B63, 22A22, 53D17, 53D50 

\noindent {\bf JGP subject code:} Symplectic geometry

\eject
\normalsize

\section{Introduction}

Symplectic groupoids were introduced by Karasev and Weinstein in the 80's, \cite{Ka,We1987} as a tool to quantize Poisson 
manifolds. They immediately became objects of independent math interests and one of the cornerstones of Poisson geometry. Our
knowledge on their role dramatically improved after the work by Cattaneo-Felder \cite{CaFe}, interpreting them as the phase space
of the Poisson sigma model, and Crainic-Fernandes \cite{CrFe} on the integrability of algebroids. On the contrary, quantum
aspects were much less studied: in \cite{XW} P.Xu and A.Weinstein defined the right notion of prequantization. Such a prequantization
can be explicitly constructed by reducing the prequantization of the phase space of the Poisson sigma model, as shown in \cite{BCZ}. In \cite{We2} the so-called noncommutative torus was recovered by the geometric quantization of the symplectic groupoid integrating the underling symplectic structure. Very recently, a notion of polarization of symplectic groupoids has been introduced in \cite{H}. Very roughly, the idea is to associate to any Poisson manifold a $C^*$-algebra constructed out of the space of polarized sections. This idea realizes a fundamental pattern in noncommutative geometry, where according to the Gelfand-Naimark theorem the noncommutative counterpart of the algebra of continuous functions on a (compact) manifold is a $C^*$-algebra.

During the same years, following the impulse of A. Connes, noncommutative geometry evolved trying to give  an axiomatic description of what a noncommutative manifold should be. The most studied examples can be collected in two main classes: $i$) the so-called $\theta$-manifolds, where the underlying Poisson structure is determined by the action of an abelian group, like the celebrated noncommutative torus, and the Connes-Landi $4$-sphere, see  \cite{CL} and also {\cite{CDV}}; $ii$) quantum groups and the associated homogeneous spaces. The $\theta$-manifolds are well studied from many different points of view: the symplectic groupoid was described first in \cite{Xu1}, their deformation quantization in \cite{Rieffel} and the polarization of the symplectic groupoid in \cite{H}. On the contrary, for spaces related to quantum groups less is known. The Poisson Lie groups, that are the semiclassical limit of quantum groups, were shown to be integrable in \cite{LW}. The problem of Poisson reduction of the symplectic groupoid was discussed in \cite{Xu}: from these results the integration of Poisson homogeneous spaces that are quotient by Poisson Lie subgroups can be obtained.  With a completely different approach, Poisson symmetric spaces (a definition which does not contain all covariant Poisson structures on symmetric spaces) were shown to be integrable in \cite{Xu2}. Nothing is known concerning the quantization of the symplectic groupoid.

Much information about quantum aspects comes from quantum group theory, especially for what concerns the study of $C^*$-algebras of basic examples of quantum groups and homogeneous spaces. It is an interesting program to look at these constructions from the point of view of the quantization of the symplectic groupoid like the one proposed in \cite{H}.

When we started this project, we realized that there was no construction of symplectic groupoids integrating the most important examples of Poisson homogeneous spaces coming from the semiclassical limit of homogeneous spaces of quantum groups, like Podles spheres, odd spheres, quantum grassmanians. The most relevant properties of the underlying Poisson structures is that  they can be obtained as quotients by coisotropic subgroups. The present paper is devoted to this construction and must be thought of as preliminary to the problem of quantization, that we hope to address in the future.
While we were finishing this paper, we were aware of \cite{Lu2007}, where a Poisson groupoid on any Poisson homogeneous space is presented. Moreover, conditions for the Poisson structure to be nondegenerate, so giving a symplectic groupoid, are discussed. The paper \cite{Lu2007} covers a large part of our results, that we obtained independently; in particular in the complete case it gives the symplectic groupoid that we describe in Theorem \ref{thm_groupoid}. Nevertheless, since the two approaches are different we think that our paper can help to clarify some issues. The differences between our paper and \cite{Lu2007} come out in the discussion of the noncomplete case ({\it i.e.} when dressing vector fields are not complete). In fact under a weaker hypothesis about the integrability of dressing vector fields (that we call {\it relative completeness}), we always obtain a symplectic groupoid. In concrete examples we show that the constructions are different.

This is the plan of the paper. In section 2 we recall very basic facts about Poisson manifolds and symplectic groupoids, mainly to fix notations. In section 3 we recall basic facts of Poisson Lie groups, following \cite{LuTh}. In section 4 we discuss the reduction procedure in terms of a moment map. When the Poisson Lie group is complete, the reduction is a straightforward analogue of the trivial case, where the action given by left multiplication of the subgroup can be lifted to a hamiltonian action on $T^*G$ and the groupoid is obtained by Marsden-Weinstein reduction. Indeed if the Poisson Lie group $G$ is complete then the left multiplication can be lifted to the groupoid and this action is hamiltonian in terms of Lu's momentum map. In the noncomplete case, the action can be lifted only as an infinitesimal action, and it is better to formulate it in terms of the symplectic action of the groupoid integrating the dual Poisson Lie group $G^*$. Nevertheless, if the subgroup satisfies the condition of relative completeness the procedure still works.

\bigskip
\bigskip

\section{Preliminaries on Poisson manifolds}

In this section we introduce the main definitions concerning the theory of Poisson manifolds and symplectic groupoids.

A {\it Poisson manifold} $P$ is a smooth manifold provided with a bivector $\pi_P\in \Gamma(\Lambda^2TP)$ satisfying $[\pi_P,\pi_P]_S=0$, where $[,]_S$ is the Schouten bracket between multivector fields. The cartesian product $P=M\times N$ of two Poisson manifolds is a Poisson manifold with Poisson tensor $\pi_{M\times N}=\pi_M+\pi_N$. The Poisson bivector $\pi_P$ defines a bundle map $\pi_P^\sharp:T^*P\rightarrow TP$ as $\langle \pi^\sharp_P(\omega_p),\nu_p\rangle = \langle \pi_P(p),\nu_p\wedge \omega_p\rangle$, for $p\in P$, $\omega_p,\nu_p\in T^*_pP$.
A submanifold $C$ of $P$ is a {\it coisotropic submanifold} if
$\pi^\sharp_P \left( N^* C \right) \subset TC$, where $N^* C$ is the conormal bundle of $C$, $N^*_x C = \{ \omega \in T^*_x P:~ \langle\omega , V\rangle = 0,\, \forall\ V \in T_x C\}$, for all $x \in C$. The generalized distribution defined by $\pi_P^\sharp(N^*C)$ is integrable and the space $\underline{C}$ of coisotropic leaves, if smooth, is a Poisson manifold. A submanifold $C$ is a {\it Poisson submanifold} if $\pi_P(c)\in\Lambda^2T_cC$; it is coisotropic and the coisotropic foliation is trivial. A smooth map $\Psi:M\rightarrow N$ between two Poisson manifolds is a {\it Poisson map} if the Poisson tensors are $\Psi$-related. In \cite{We1} it is proven that if $\Psi: M \rightarrow N$ is a Poisson map and $\mathcal{O}_N \subset N$ is a symplectic leaf then $\Psi^{-1} (\mathcal{O}_N) \subset M$, whenever is a submanifold, is coisotropic.

Let $\G=(\G,\G_0,\alpha_\G,\beta_\G,m_\G,\iota_\G,\epsilon_\G)$ be a Lie groupoid over the space of unities $\G_0$, where $\alpha_\G, \beta_\G:{\cal G}\rightarrow \G_0$ are the source and target maps, respectively, $m_\G:\G^{(2)}\rightarrow\G$ is the multiplication, $\iota_\G:\G\rightarrow\G^{-1}$ is the inversion and $\epsilon_\G:\G_0\rightarrow\G$ is the embedding of unities. Our conventions are that $(x_1,x_2)\in{\cal G}^{(2)}$ if $\beta_\G(x_1)=\alpha_\G(x_2)$. We say that $\G$ is source simply connected (ssc) if $\alpha_\G^{-1}(m)$ is connected and simply connected for any $m\in \G_0$.

A {\it symplectic groupoid} is a Lie groupoid, which is equipped with a symplectic form, such that the graph
of the multiplication is a lagrangian submanifold of ${\cal G} \times {\cal G} \times \bar{\cal G}$, where $\bar{\cal G}$ means $\cal G$ with the opposite symplectic structure.
There exists a unique Poisson structure on $\G_0$ such that $\alpha_\G$ and $\beta_\G$ are Poisson and anti-Poisson mappings, respectively. A Poisson manifold is said to be integrable if it is the space of units of a symplectic groupoid.

An equivalent characterization for a Lie groupoid $\cal G$ to be a symplectic groupoid is that the symplectic form $\omega$ of $\cal G$ be multiplicative, {\it i.e.} let $\prGuno$, $\prGdue: {\cal G}^{(2)}\rightarrow {\cal G}$ be respectively the projections onto the first and
second factor, then $m^*_\G \omega = \prGuno^* \omega + \prGdue^* \omega$.

Following \cite{MW}, we define the left action of $\cal G$ on a manifold $P$ with anchor $J:P\rightarrow \G_0$ a mapping from ${\cal G}{}_{\beta_\G}\times_J P=\{(x,p)\in\G\times P\,|\,\beta_\G(x)=J(p)\}$ to $P$, given by $(x,p)\rightarrow xp$ such that
{\it i}) $J(xp)=\alpha_\G(x)$, {\it ii}) $(xy)p =x(yp)$, {\it iii}) $\epsilon(J(p))p=p$. In the case of $P$ symplectic, the action of $\G$ is called {\it symplectic} if the graph of the action $\{(x,p,xp), \beta_\G(x)=J(p)\}$ is lagrangian in $\G\times P\times \bar P$. In \cite{MW} it is shown that $J:P\rightarrow \G_0$ is a Poisson map. Symplectic reduction is defined as follows. The isotropy group  $\G^m_m=\alpha^{-1}_\G(m)\cap\beta^{-1}_\G(m)$ of $m\in \G_0$ leaves invariant $J^{-1}(m)$ and $\G^m_m\backslash J^{-1}(m)$, whenever a manifold, is symplectic.

\bigskip\bigskip

\section{The symplectic groupoid of a Poisson Lie group}

In this section we recall basic results of the theory of Poisson Lie groups, and of the symplectic groupoid integrating them. The presentation follows \cite{LuTh}.

A {\it Poisson Lie group} $G$ is a Poisson manifold and a Lie group whose multiplication map $G \times G \rightarrow G$ is a Poisson map, where $G \times G$ is endowed with the product Poisson structure.
In terms of the Poisson bivector field $\pi_G$, it means that
\begin{equation}\label{multiplicativity}
 \pi_G (gh) = l_g \pi_G (h) + r_h \pi_G (g), \, \forall g,h \in G\, ,
\end{equation}
where $l_g$ ($r_g$) stands for the left (right) group multiplication by $g$, as well as for the induced tangent map. A multivector field satisfying (\ref{multiplicativity}) is said to be {\it multiplicative}.

A left action $\sigma : G \times P \rightarrow P$ of a Poisson Lie Group $G$ on a Poisson manifold $P$ is called a {\it Poisson action} if $\sigma$ is a Poisson map, where $G \times P$ is endowed with the product Poisson structure. In terms of the Poisson bivectors $\pi_G$ of $G$ and $\pi_P$ of $P$, $\sigma$ is a Poisson action if, for any $g\in G$ and $p\in P$, we have that
$$
\pi_P(\sigma(g,p))=g_*\pi_P(p) + p_*\pi_G(g)\;,
$$
where $g:P\rightarrow P$, $g(p)=\sigma(g,p)$ and $p:G\rightarrow P$, $p(g)=\sigma(g,p)$.

A {\it Lie bialgebra} is the couple $(\mathfrak{g},\mathfrak{g}^\star)$, where $\mathfrak{g}={\rm Lie} G$ and its dual $\mathfrak{g}^\star$ is a Lie algebra with bracket map $[~,~]_{\mathfrak{g}^\star}$ such that $\delta=[,]_{\mathfrak{g}^\star}^*:\mathfrak{g}\rightarrow\Lambda^2\mathfrak{g}$ is a $1$-cocycle on $\mathfrak{g}$ relative to the adjoint representation of $\fg$ on $\wedge^2 \mathfrak{g}$.

\smallskip
Let us assume that the group $G$ is connected and
simply connected.

\begin{thm}\label{bialgebrasversuspoissonlie}
There is a one to one correspondence between connected and simply connected Poisson Lie groups and Lie bialgebras.
\end{thm}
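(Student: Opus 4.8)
The plan is to construct both maps of the correspondence and check they are mutually inverse, with the substance concentrated in the integration direction. \textbf{From a Poisson Lie group to a Lie bialgebra:} given $(G,\pi_G)$, setting $g=h=e$ in \eqref{multiplicativity} forces $\pi_G(e)=0$, so $\pi_G$ has a well-defined intrinsic derivative at $e$. Define $\delta:\mathfrak{g}\to\Lambda^2\mathfrak{g}$ as the linearization at $e$ of the right-trivialized bivector $g\mapsto r_{g^{-1}}\pi_G(g)$, equivalently $\delta(X)=(\mathcal{L}_{X^r}\pi_G)(e)$ with $X^r$ right-invariant. Differentiating \eqref{multiplicativity} in $h$ at $e$ shows $\delta$ is a $1$-cocycle of $\mathfrak{g}$ with values in $\Lambda^2\mathfrak{g}$ for the adjoint action, and differentiating $[\pi_G,\pi_G]_S=0$ to second order at $e$ shows that $\delta^\star:\Lambda^2\mathfrak{g}^\star\to\mathfrak{g}^\star$ satisfies the Jacobi identity; thus $(\mathfrak{g},\mathfrak{g}^\star)$ with $[\,,\,]_{\mathfrak{g}^\star}=\delta^\star$ is a Lie bialgebra.

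\textbf{From a Lie bialgebra to a Poisson Lie group:} let $(\mathfrak{g},\mathfrak{g}^\star)$ be a Lie bialgebra and $G$ the connected and simply connected group with $\mathrm{Lie}\,G=\mathfrak{g}$. Since $\delta$ is a Lie algebra $1$-cocycle and $G$ is simply connected, it integrates to a unique smooth group $1$-cocycle $\Pi:G\to\Lambda^2\mathfrak{g}$ for the adjoint action, i.e. $\Pi(gh)=\Pi(g)+\Ad_g\Pi(h)$ and $d_e\Pi=\delta$. Put $\pi_G(g):=r_g\bigl(\Pi(g)\bigr)$; the cocycle identity for $\Pi$ is precisely the multiplicativity \eqref{multiplicativity}. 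The only nontrivial point is $[\pi_G,\pi_G]_S=0$: since $\pi_G$ is multiplicative so is the trivector $[\pi_G,\pi_G]_S$, which moreover vanishes at $e$; hence its right-trivialization is a group $1$-cocycle $G\to\Lambda^3\mathfrak{g}$, and by connectedness of $G$ it is determined by its differential at $e$, which equals (up to a constant) the co-Jacobiator of $[\,,\,]_{\mathfrak{g}^\star}$ and therefore vanishes. So $(G,\pi_G)$ is a Poisson Lie group inducing the given bialgebra.

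\textbf{Mutual inverseness and the main difficulty:} differentiating the bivector $\pi_G(g)=r_g(\Pi(g))$ just built returns $d_e\Pi=\delta$, so one composite is the identity on Lie bialgebras; conversely, if two multiplicative Poisson bivectors on the connected group $G$ have the same intrinsic derivative at $e$, their right-trivializations are group $1$-cocycles agreeing to first order at $e$, hence equal by connectedness, so the bivectors coincide — this gives the identity on connected simply connected Poisson Lie groups, and bijectivity follows. The heart of the argument, and the main obstacle, is the lemma that a multiplicative multivector field vanishing at $e$ is the same as a smooth group-cohomology $1$-cocycle valued in the appropriate $\Lambda^k\mathfrak{g}$ — existing (given $\delta$) because $G$ is simply connected and unique because $G$ is connected; everything else reduces to the purely bilinear-algebra check that the intrinsic derivative of $[\pi_G,\pi_G]_S$ at $e$ is the co-Jacobiator of $\mathfrak{g}^\star$.
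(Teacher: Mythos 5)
The paper offers no proof of this statement: it is Drinfeld's integration theorem, recalled here as background with the presentation (and proof) deferred to \cite{LuTh}. Your argument is precisely the standard proof of that result and is correct in all essentials: $\pi_G(e)=0$, the intrinsic derivative $\delta$ is a cocycle whose dual bracket satisfies Jacobi, and in the converse direction the integrated group cocycle $\Pi$ gives multiplicativity while the vanishing of $[\pi_G,\pi_G]_S$ follows from the key lemma you correctly isolate, namely that a multiplicative multivector field on a connected group is determined by its intrinsic derivative at $e$. The only steps you assert rather than verify --- that $[\pi_G,\pi_G]_S$ is again multiplicative when $\pi_G$ is, and that its intrinsic derivative at $e$ is (a multiple of) the co-Jacobiator of $\delta^\star$ --- are genuine computations, but they are exactly the ones carried out in \cite{LuTh} and \cite{LW2}, so nothing is missing from the architecture of the argument.
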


\smallskip
\begin{dfn}
The \textbf{double Lie algebra} $\mathfrak{d} = \fg \bowtie \fg^\star$ of the Lie bialgebra $(\fg, \fg^\star)$ is defined as the vector space $\fg \oplus \fg^\star$ endowed with the unique Lie bracket structure such that
\begin{itemize}
 \item[$\imath$)] it restricts to the given Lie brackets on $\fg$ and $\fg^\star$;
 \item [$\imath \imath$)] the symmetric and non-degenerate scalar product on $\fg \oplus \fg^\star$ defined by
\begin{equation}
 \langle X + \xi , Y + \eta\rangle= \xi(Y) + \eta(X),\, \forall X,Y \in \fg,\, \forall \xi, \eta \in \fg^\star \nonumber
\end{equation}
 is invariant.
\end{itemize}
\end{dfn}

In particular the bracket is defined for any $X,Y\in\mathfrak{g}$ and $\xi,\eta\in\mathfrak{g}^\star$ as
\begin{equation}
\label{bracket_double}
[X+\xi, Y+\eta] = [X,Y] -ad^*_\eta(X) +ad^*_\xi(Y) + [\xi,\eta] + ad^*_X(\eta) - ad^*_Y(\xi)\;.
\end{equation}

It can be shown that $\mathfrak{d}=\mathfrak{g}\oplus\mathfrak{g}^\star$ equipped with the bracket (\ref{bracket_double}) is a Lie algebra if and only if $(\mathfrak{g},\mathfrak{g}^\star)$ is a Lie bialgebra. As a consequence we have that if $(\fg, \fg^\star)$ is a Lie bialgebra, then so is $(\fg^\star ,\fg)$. In particular the connected and simply connected group $G^*$ integrating $\mathfrak{g}^\star$ is a Poisson-Lie group. We call it the {\it dual Poisson Lie group} of ($G,\pi_G$).

The {\it double Lie group} $D$ is defined as the connected and simply-connected Lie group with Lie algebra $\mathfrak{d}$. Let $\phi_{1}: G \rightarrow D$ and $\phi_{2}:
G^* \rightarrow D$ be the Lie group
homomorphisms obtained by respectively integrating the
inclusion maps ${\mathfrak g} \hookrightarrow {\mathfrak d}$ and ${\mathfrak g}^\star \hookrightarrow {\mathfrak d}$. In the following we denote $\phi_1(g)=\og$ and $\phi_2(\gamma)=\oga$.

The following formulas, proved in \cite{LuTh}, describe the Poisson tensor of $G$ and $G^*$ in terms of the group structure of $D$, making explicit the correspondence described in the integration Theorem \ref{bialgebrasversuspoissonlie}. Indeed, let $\pg:\mathfrak{d}\rightarrow\mathfrak{g}$, $\pgs:\mathfrak{d}\rightarrow\mathfrak{g}^\star$ be the natural projections; for any $g\in G$, $\gamma\in G^*$, $X_i\in\mathfrak{g}$ and $\xi_i\in\mathfrak{g}^\star$ we have that
\begin{eqnarray}
\label{poisson_tensors}
\langle r_{g^{-1}}\pi_G(g),\xi_1\wedge\xi_2\rangle &=& -\langle \pg\Ad_{\og^{-1}}\xi_1,\pgs\Ad_{\og^{-1}}\xi_2\rangle ~~~~ g\in G,~\xi_i\in\mathfrak{g}^\star\;,\cr
\langle r_{\gamma^{-1}}\pi_{G^*}(\gamma),X_1\wedge X_2\rangle &=&\langle \pg\Ad_{\oga^{-1}}X_1,\pgs\Ad_{\oga^{-1}}X_2\rangle\;~~~ \gamma\in G^*, X_i\in\mathfrak{g}\;,
\end{eqnarray}
where $\Ad$ is the adjoint action of $D$.

 For further purposes, let us consider the Poisson tensor $\pi_{+}$ on $D$ defined as follows:
\[ \pi_{+} (d) ~ = ~ {\frac{1}{2}} (r_{d} \pi_{0} ~ + ~ l_{d} \pi_{0}), ~~~~~~~~~d \in D, \]
where $\pi_{0} \in {\mathfrak d} \wedge {\mathfrak d}$ is defined by
 $\pi_{0} (\xi_{1} + X_{1}, ~ \xi_{2} + X_{2} )  =  \langle X_{1}, ~ \xi_{2}\rangle  -  \langle X_{2}, ~  \xi_{1}\rangle$,
for $\xi_{1} + X_{1}, \xi_{2} + X_{2} \in {\mathfrak d}^{*} \cong {\mathfrak g}^\star \oplus {\mathfrak g}$. If $d \in D$ can be factorized as $d = \og \oga$ for some $g \in G$ and $\ga \in G^{*}$, then an explicit formula for $\pi_{+}$ is given by
\begin{eqnarray}
& & \langle (l_{\og^{-1}}r_{\oga^{-1}})  \pi_{+}
(d), (\xi_{1} + X_{1})\wedge  (\xi_{2} + X_{2}) \rangle = \label{symplectic_tensor}\\
  & = & \langle X_{1}, \xi_{2}\rangle - \langle X_{2}, \xi_{1}\rangle + \langle l_{g^{-1}} \pi_G(g),\xi_{1}\wedge\xi_{2}\rangle
  + \langle r_{\ga^{-1}}\pi_{G^*} (\gamma), X_{1}\wedge
X_{2}\rangle = \nonumber \\
& = & \langle X_{1}, ~ \xi_{2} + Ad_{\oga} \pgs Ad_{\oga^{-1}} X_{2} \rangle
 ~ - ~ \langle\xi_{1}, ~ X_{2} + Ad_{\og^{-1}} \pg Ad_{\og} \xi_{2}\rangle \;.\nonumber
\end{eqnarray}
It can be proved that $\pi_+(g\gamma)$ is nondegenerate.

If $\phi_{1} \times \phi_{2}$ is a global diffeomorphism, then we can identify $D$ with $G\times G^*$ and $\pi_+$ defines a
symplectic structure on the double. Moreover the global decomposition of $D$ defines a left action of $G$ on $G^*$ and a
right action of $G^*$ on $G$. Let $g\in G$ and $\gamma\in G^*$ and let $g\gamma={}^g\gamma ~g^\gamma$, where we identify $g$ with
$\og$ and $\gamma$ with $\oga$. It is immediate to verify that $(g,\gamma)\rightarrow {}^g\gamma$ is a left action of $G$ on
$G^*$ and $(g,\gamma)\rightarrow g^\gamma$ is right action of $G^*$ on $G$. These are known as {\it dressing actions}. It can be
easily verified that for any $g,g_i\in G$ and $\gamma,\gamma_i\in G^*$ we have that
\begin{equation}\label{properties_dressing}
 (g_1 g_2)^\ga = g_1^{{}^{g_{2}}\ga} ~ g_2^\ga~~;~~~~~~~~~
 {}^g (\ga_1 \ga_2) = {}^g \ga_1 ~~ {}^{g^{\ga_{1}}}
 \ga_2~~.
\end{equation}
Such \emph{intertwining} property between the two actions defines
what is called a {\it matched pair of Lie groups} \cite{Mj,LW2}; we will come back to this notion in Section \ref{generalcase}.
\begin{lem} \label{lem_fund_vec}
The fundamental vector fields associated to the left dressing action of $G$ on $G^*$ and to the right dressing action of $G^*$ on $G$ are respectively:
\begin{equation}\label{dressing_vector_fields}
{\mathcal S}_{X} (\ga) = \pi_{G^*}^\sharp (r_{\ga^{-1}}^* X)~, ~~~~ \forall \ga \in G^*,~~ X \in \fg \equiv \left( \fg^\star \right)^\star~;
\end{equation}
\[
{\mathcal S}_{\xi} (g) = - \pi_G^\sharp (l_{g^{-1}}^* \xi) ~, ~~~~ \forall g \in G, ~~\xi \in \fg^\star~.
\]
\begin{proof}
A direct computation gives the following expressions for the fundamental vector fields associated with the left dressing action
of $G$ on $G^*$ and with the right dressing action of $G^*$ on $G$:
\[
 {\mathcal S}_{X} (\ga) = l_{\ga} ~ \pgs
 \left( \Ad_{\oga^{-1}} X \right), ~~~~ \forall \ga \in G^*, ~~ X \in \fg;
\]
\[
 {\mathcal S}_{\xi} (g) = r_g ~ \pg \left( \Ad_\og\xi
 \right), ~~~~ \forall g \in G, ~~ \xi \in \fg^\star.
\]
We have to prove that the pointwise pairing of these vector fields with generic $1$-forms coincide. Then, given $X,Y\in \fg$,$\ga \in G^*$,
\begin{eqnarray}
 \langle{\mathcal S}_{X} (\ga) , r_{\ga^{-1}}^* Y\rangle &=&
 \langle l_{\ga} \pgs \left( \Ad_{\oga^{-1}} X  \right) ,
 r_{\ga^{-1}}^* Y\rangle   \nonumber \\
 &=& \langle \pgs \left( \Ad_{\oga^{-1}} X \right) , \Ad_{\oga^{-1}}^* Y\rangle =
 \langle \pgs \Ad_{\oga^{-1}} X , \pg \Ad_{\oga^{-1}}
 Y\rangle  \nonumber \\
 &=& \langle\pi_{G^*} (\ga), r^*_{\ga^{-1}} (Y\wedge X)\rangle
 \equiv \langle\pi_{G^*}^\sharp (r_{\ga^{-1}}^* X),
 r_{\ga^{-1}}^* Y\rangle ~~\ .\nonumber
\end{eqnarray}
The proof for $\mathcal{S}_{\xi} (g)$ is similar.
\end{proof}
\end{lem}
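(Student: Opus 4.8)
The plan is to produce each dressing vector field in two steps. First I would compute it explicitly from the definition, in terms of the group structure of the double $D$; then I would match the resulting expression with the image under $\pi_{G^*}^\sharp$ (respectively $-\pi_G^\sharp$) of the indicated $1$-form, by feeding the formulas (\ref{poisson_tensors}) for $\pi_G$ and $\pi_{G^*}$ into the computation.

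For the first step, fix $X\in\fg$ and $\gamma\in G^*$ and differentiate the factorisation that defines the left dressing action. Working in $D$ under the identification of $g\in G$, $\gamma\in G^*$ with $\og,\oga$, we have $(\exp tX)\,\gamma={}^{\exp tX}\gamma\;(\exp tX)^{\gamma}$, where the first factor is a curve in $\phi_2(G^*)$ through $\oga$ and the second a curve in $\phi_1(G)$ through $e$. Differentiating at $t=0$ with the Leibniz rule, the left-hand side is $r_{\oga}X$ with $X\in\fg\hookrightarrow\fd$, and the right-hand side is $l_{\oga}\zeta+l_{\oga}Z$ with $\zeta\in\fg^\star$ and $Z\in\fg$. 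Comparing, $\Ad_{\oga^{-1}}X=\zeta+Z$, so $\zeta=\pgs(\Ad_{\oga^{-1}}X)$ and $Z=\pg(\Ad_{\oga^{-1}}X)$; since $\phi_2$ is an immersion intertwining left translations, this reads $\mathcal{S}_X(\gamma)=l_\gamma\,\pgs(\Ad_{\oga^{-1}}X)$. The same computation with $G$ and $G^*$ interchanged, starting from $g\,(\exp t\xi)={}^{g}(\exp t\xi)\;g^{\exp t\xi}$, gives $\mathcal{S}_\xi(g)=r_g\,\pg(\Ad_{\og}\xi)$.

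For the second step I would verify $\mathcal{S}_X(\gamma)=\pi_{G^*}^\sharp(r_{\gamma^{-1}}^{*}X)$ by pairing both sides against the covectors $r_{\gamma^{-1}}^{*}Y$, $Y\in\fg$, which exhaust $T^*_\gamma G^*$ because $(\fg^\star)^\star\cong\fg$. Pulling the $l_\gamma$ onto the covector (using that $\phi_2$ is a homomorphism), the left-hand side becomes $\langle\pgs\Ad_{\oga^{-1}}X,\Ad_{\oga^{-1}}^{*}Y\rangle$; the invariance of the bilinear form on $\fd$ together with the isotropy of $\fg$ and $\fg^\star$ rewrites this as $\langle\pg\Ad_{\oga^{-1}}Y,\pgs\Ad_{\oga^{-1}}X\rangle$, which by the second line of (\ref{poisson_tensors}) equals $\langle r_{\gamma^{-1}}\pi_{G^*}(\gamma),Y\wedge X\rangle=\langle\pi_{G^*}^\sharp(r_{\gamma^{-1}}^{*}X),r_{\gamma^{-1}}^{*}Y\rangle$. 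As this holds for all $Y\in\fg$, the first identity follows; the identity for $\mathcal{S}_\xi$ is obtained in the same way from the first line of (\ref{poisson_tensors}), with the sign in that formula accounting for the minus sign.

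The step I expect to be the main obstacle is the first one: differentiating the factorisation map requires care in tracking which translations and which adjoint representation appear --- it is the adjoint action of the double $D$, not those of $G$ or $G^*$ --- and in using the immersions $\phi_1,\phi_2$ to read a vector tangent to $D$ as one tangent to $G$ or $G^*$. Once the two explicit formulas are in hand, the second step is a routine bookkeeping with the invariant form on $\fd$.
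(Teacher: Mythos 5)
Your proposal is correct and follows essentially the same route as the paper: obtain the explicit expressions $\mathcal{S}_X(\gamma)=l_\gamma\,\pgs(\Ad_{\oga^{-1}}X)$ and $\mathcal{S}_\xi(g)=r_g\,\pg(\Ad_{\og}\xi)$ from the factorisation in the double, then pair against the covectors $r_{\gamma^{-1}}^*Y$ and invoke the formulas (\ref{poisson_tensors}) together with the invariance of the pairing on $\fd$. The only difference is that you carry out in detail the differentiation of the factorisation that the paper dismisses as ``a direct computation,'' which is a welcome addition rather than a deviation.
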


The vector fields (\ref{dressing_vector_fields}) are called {\it dressing vector fields}; their definition depends only on the
infinitesimal Lie bialgebra. Therefore they are defined even when $\phi_1\times\phi_2$ is not a diffeomorphism (and more generally
even if $\phi_1$, $\phi_2$ does not exist). We saw that if $D=G\times G^*$, then the dressing vector fields are complete. In
\cite{LuTh}, Lu has proved that $i$) the dressing vector fields of $G$ are complete if and only if those of $G^*$ are complete; $ii$)
$D=G\times G^*$ if and only if the dressing vector fields are complete.

Integrability of Poisson Lie groups has been shown in \cite{LW}. Let us consider the submanifold of $G\times G^*\times G^*\times G$ of dimension $2\dim G$ defined by
\begin{equation}\label{sympl_grpd_noncomplete}
\Omega=\{(g_1,\gamma_1,\gamma_2,g_2)\in G\times G^*\times G^*\times G,\; \og_1\oga_1=\oga_2 \og_2\in D\}\;.   \end{equation}
The local diffeomorphism $\Phi:\Omega\rightarrow D$, defined as $\Phi(g_1,\gamma_1,\gamma_2,g_2)=\og_1\oga_1$, induces a nondegenerate Poisson structure on $\Omega$, that we still denote with $\pi_+$.

\begin{prop}\label{sympl_gpd} Let $G$ be a connected and simply connected Poisson Lie group and let $G^*$ be the dual Poisson Lie group. Consider the groupoid $\G(G)=(\Omega,\alpha_G,\beta_G,m_G,\epsilon_G,i_G)$ over $G$ with structure maps:
\begin{itemize}
 \item[i)] $\alphaG(g_1,\gamma_1,\gamma_2,g_2)=g_1$;
 \item[ii)] $\betaG(g_1,\gamma_1,\gamma_2,g_2)=g_2$;
 \item[iii)] $\epsilon_{\mbox{\tiny G}}(g)=(g,e,e,g)$;
 \item[iv)] $m_{\mbox{\tiny G}}[(g_1,\gamma_1,\gamma_2,g_2)(g_2,\lambda_1,\lambda_2,k_2)]=(g_1,\gamma_1\lambda_1,\gamma_2\lambda_2,k_2)$;
 \item[v)] $\iota_{\mbox{\tiny G}}(g_1,\gamma_1,\gamma_2,g_2)=(g_2,\gamma^{-1}_1,\gamma_2^{-1},g_1)$.
\end{itemize}
Then $\G(G)$ equipped with $\pi_+^{-1}$ is a symplectic groupoid integrating $(G,\pi_G)$.

Consider the groupoid 
$\G(G^*)=(\Omega,\alpha_{G^*},\beta_{G^*},m_{G^*},\epsilon_{G^*},i_{G^*})$ over $G^*$,
with structure maps:
\begin{itemize}
 \item[i)] $\alphaGs(g_1,\gamma_1,\gamma_2,g_2)=\gamma_2$;
 \item[ii)] $\betaGs(g_1,\gamma_1,\gamma_2,g_2)=\gamma_1$;
 \item[iii)] $\epsilon_{\mbox{\tiny G}^*}(\gamma)=(e,\gamma,\gamma,e)$;
 \item[iv)] $m_{\mbox{\tiny G}^*}[(g_1,\gamma_1,\gamma_2,g_2)(k_1,\lambda_1,\gamma_1,k_2)]=(g_1k_1,\lambda_1,\gamma_2,g_2k_2)$;
 \item[v)] $\iota_{\mbox{\tiny G}^*}(g_1,\gamma_1,\gamma_2,g_2)$ $=(g_1^{-1},\gamma_2,\gamma_1,g_2^{-1})$.
\end{itemize}
Then $\G(G^*)$ equipped with $-\pi_{+}^{-1}$ is a symplectic groupoid integrating $(G^*,\pi_{\mbox{\tiny G}^*})$.
\end{prop}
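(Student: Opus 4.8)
The plan is to establish the statement for $\G(G)$ in detail, and then to deduce the one for $\G(G^*)$ from the symmetry exchanging the two Lagrangian subalgebras of the Manin triple $\mathfrak{d}=\mathfrak{g}\bowtie\mathfrak{g}^\star$. For $\G(G)$ one has to check: (a) that $\G(G)$ is a Lie groupoid; (b) that $\omega:=\pi_+^{-1}$ is a symplectic form on $\Omega$; (c) that $\omega$ is multiplicative, i.e.\ $m_{\mbox{\tiny G}}^*\omega=\prGuno^*\omega+\prGdue^*\omega$ on $\Omega^{(2)}$; and (d) that the Poisson structure which $\omega$ induces on the unit manifold is $\pi_G$. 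Throughout I would use that $\Phi\colon\Omega\to D$ is a local diffeomorphism onto the factorizable locus of $D$, that $\Phi^*\pi_+=\pi_+$, and that $\pi_+$ is nondegenerate there (all recalled just before the statement).

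Point (a) is routine manipulation of the defining relation $\og_1\oga_1=\oga_2\og_2$. Smoothness of the structure maps is obvious, and the substantial thing to verify is that $m_{\mbox{\tiny G}}$ and $\iota_{\mbox{\tiny G}}$ land in $\Omega$: for a composable pair, using $\og_1\oga_1=\oga_2\og_2$ and $\og_2\overline{\lambda_1}=\overline{\lambda_2}\,\overline{k_2}$, one gets $\og_1\overline{\gamma_1\lambda_1}=\og_1\oga_1\overline{\lambda_1}=\oga_2\og_2\overline{\lambda_1}=\oga_2\overline{\lambda_2}\,\overline{k_2}=\overline{\gamma_2\lambda_2}\,\overline{k_2}$, while for $\iota_{\mbox{\tiny G}}$ one checks $\overline{g_2}\,\overline{\gamma_1^{-1}}=\overline{\gamma_2^{-1}}\,\overline{g_1}$ from the same relation. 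The composability condition $\betaG(x)=\alphaG(y)$ matches the domain of $m_{\mbox{\tiny G}}$ written in the statement, and associativity, the unit laws and the inverse laws reduce to group identities in $G^*$. Point (b) is immediate: $\pi_+$ is nondegenerate on $\Omega$, and the inverse of a nondegenerate Poisson bivector is a symplectic form.

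Point (c) is the heart of the matter. Since $\Phi$ is a local diffeomorphism and multiplicativity is an infinitesimal statement, I would transport everything to $D$. On the composable locus $\Phi$ carries the groupoid product to the ``zig-zag'' product $d_1\star d_2=d_1\,\og_2^{-1}\,d_2=\oga_2\,d_2=d_1\overline{\lambda_1}$, where $g_2=\betaG(x)=\alphaG(y)$ is the composability element and $\gamma_2,\lambda_1$ are the $G^*$-factors forced by the two relations above, and it carries the source and target to the two projections $\og\oga\mapsto g$ and $\oga\og\mapsto g$. One then verifies, using the explicit formula (\ref{symplectic_tensor}) for $\pi_+$, that the graph of $\star$ (equivalently the image under $\Phi\times\Phi\times\Phi$ of the graph of $m_{\mbox{\tiny G}}$) is isotropic in $D\times D\times\bar D$; since this graph has dimension $3\dim G$, half of $\dim(\Omega\times\Omega\times\bar\Omega)$, isotropic forces Lagrangian, which is exactly multiplicativity. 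The structural fact which makes this computation collapse is that $\phi_1(G)$ and $\phi_2(G^*)$ are Lagrangian submanifolds of $(D,\pi_+^{-1})$ along themselves --- checked at the unit directly from the nondegeneracy of $\pi_0$ on $\mathfrak{d}^*$ and propagated using $\pi_+=\frac{1}{2}(r_d\pi_0+l_d\pi_0)$ together with the $\Ad$-invariance of $\mathfrak{g}$ under the subgroup $\phi_1(G)$ --- so that the middle factor $\og_2$ forced by composability drops out and the identity reduces to the (twisted) behaviour of $\pi_+$ under left and right translations in $D$. I expect this to be the main obstacle: it is the one genuinely computational step, and the bookkeeping of the various translations and adjoint actions in (\ref{symplectic_tensor}) is delicate. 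In the complete case $\Omega=D=G\times G^*$ this computation is precisely the one in \cite{LW}, and it carries over here verbatim because it is local.

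For point (d), under $\Phi$ the source $\alphaG$ becomes the decomposition map $\og\oga\mapsto g$, and by the first identity in (\ref{poisson_tensors}) this pushes $\pi_+$ forward to $\pi_G$; hence $\alphaG$ is a Poisson map, and by the uniqueness recalled in Section 2 the Poisson structure induced on the units $G$ is $\pi_G$, so $\G(G)$ integrates $(G,\pi_G)$ (a parallel computation shows that $\betaG$ is anti-Poisson). Finally, for $\G(G^*)$: the double of the Lie bialgebra $(\mathfrak{g}^\star,\mathfrak{g})$ is the same Lie algebra $\mathfrak{d}$, the exchange $\mathfrak{g}\leftrightarrow\mathfrak{g}^\star$ sends the element $\pi_0\in\wedge^2\mathfrak{d}$ to $-\pi_0$ and hence $\pi_+$ to $-\pi_+$, and the coordinate permutation $(g_1,\gamma_1,\gamma_2,g_2)\mapsto(\gamma_2,g_2,g_1,\gamma_1)$ is a diffeomorphism of $\Omega$ onto the analogous manifold built from $(\mathfrak{g}^\star,\mathfrak{g})$ which matches the structure maps of $\G(G^*)$ with those of the ``$\G$'' of $(G^*,\pi_{G^*})$ and the form $\pi_+^{-1}$ with $-\pi_+^{-1}$. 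Thus the first half, applied to the dual bialgebra, gives that $\G(G^*)$ with $-\pi_+^{-1}$ is a symplectic groupoid integrating $(G^*,\pi_{G^*})$.
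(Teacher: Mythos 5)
The paper offers no proof of this proposition: it is recalled from \cite{LW} (Lu--Weinstein), and the hard part --- the multiplicativity of $\pi_+^{-1}$ --- is precisely the computation you defer to that reference. Your outline (groupoid axioms from the factorization relation $\og_1\oga_1=\oga_2\og_2$, nondegeneracy of $\pi_+$ on the factorizable locus, Lagrangianity of the graph of the ``zig-zag'' product in $D\times D\times\bar D$, the source map being Poisson via (\ref{poisson_tensors}), and the $\mathfrak{g}\leftrightarrow\mathfrak{g}^\star$ symmetry sending $\pi_0\mapsto-\pi_0$ to obtain the dual statement) is the standard route and is correct in structure; just note that your point (c) is a plan rather than a carried-out argument, so what you have is an accurate reduction to the computation in \cite{LW} rather than a self-contained proof.
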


If $G$ and $G^*$ are complete, then $\Omega=G\times G^*$ globally and the above description can be given in terms of the dressing transformations. In particular the groupoid structures for $\G(G)$ read as $\alphaG(g\gamma)=g$, $\betaG(g\gamma)=g^\gamma$, $m_{\mbox{\tiny G}}[(g_1\gamma_1)(g_1^{\gamma_1}\gamma_2)]=(g_1\gamma_1\gamma_2)$, $\epsilon_{\mbox{\tiny G}}(g)=(ge)$, $\iota_{\mbox{\tiny G}}(g\gamma)=g^\gamma \gamma^{-1}$. For $\G(G^*)$ we have $\alphaGs(g\gamma)={}^g\gamma$, $\betaGs(g\gamma)=\gamma$, $m_{\mbox{\tiny G}^*}[(g_1{}^{g_2}\gamma_2)(g_2\gamma_2)]=(g_1g_2\gamma_2)$, $\epsilon_{\mbox{\tiny G}^*}(\gamma)=(e\gamma)$, $\iota_{\mbox{\tiny G}^*}(g\gamma)=g^{-1}{}^g\gamma$.

\medskip
\subsection{The non simply connected case} Let us remove in this subsection the hypothesis that $G$ is simply connected. The above construction of the symplectic groupoid cannot be repeated since now $\phi_1: \tG\rightarrow D$, where $\tG$ is the universal
covering of $G$. Let $Z\subset\tG$ be the discrete central subgroup such that $G=\tG/Z$. There exists on $\tG$ a unique multiplicative
Poisson structure $\pi_\tG$ such that the quotient $\tG\rightarrow G$ is a Poisson map and $\pi_\tG(z)=0$ for any $z\in Z$. As a
consequence the multiplication by $z$ on $\tG$ is a Poisson diffeomorphism; moreover by looking at (\ref{poisson_tensors}) we
see that since $\pi_\tG(z)=0$ we have $\Ad_{\overline z}\xi=\Ad^*_z\xi$, for any $\xi\in\mathfrak g^*$, and
$\Ad_z^*\xi=\xi$ since $Z$ is central. So we can conclude that $\phi_1(z)=\overline{z}$ commutes with $\oga$ for
any $\gamma\in G^*$ and that $Z$ acts as a symplectic groupoid morphism on the symplectic groupoid $\G(\tG)$ defined in
Proposition \ref{sympl_gpd} as $z(\tilde{g}_1,\gamma_1,\gamma_2,\tilde{g}_2)=(z\tilde{g}_1,\gamma_1,\gamma_2,z\tilde{g}_2)$.

\begin{prop}\label{sympl_grpd_non1conn}
For any Poisson Lie group $G=\tilde{G}/ Z$, $\G(G)=\G(\tG)/Z$ carries the structure of a symplectic groupoid integrating it.
\end{prop}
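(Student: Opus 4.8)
The plan is to realize $\G(G)$ as the quotient of the symplectic groupoid $\G(\tG)$ of Proposition~\ref{sympl_gpd} by the action $z\cdot(\tilde g_1,\gamma_1,\gamma_2,\tilde g_2)=(z\tilde g_1,\gamma_1,\gamma_2,z\tilde g_2)$ of the discrete group $Z$ introduced just above the statement, and to check that all the relevant structure passes to the quotient. First I would verify that this $Z$-action on $\Omega$ is free and proper. Freeness is immediate, since $z\tilde g_1=\tilde g_1$ forces $z=e$. For properness, $Z$ is a discrete, hence closed, subgroup of $\tG$, so it acts properly by left translations on $\tG$; since the first-factor projection $\Omega\to\tG$ intertwines the two $Z$-actions, for every compact $K\subset\Omega$ the set $\{z\in Z:\ zK\cap K\neq\emptyset\}$ is contained in $\{z\in Z:\ zK_1\cap K_1\neq\emptyset\}$, with $K_1$ the compact image of $K$, which is finite. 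Hence $\Omega/Z$ is a Hausdorff smooth manifold and $q\colon\Omega\to\Omega/Z$ is a covering map, in particular a local diffeomorphism and a principal $Z$-bundle.

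Next I would descend the structure. The symplectic form $\pi_+^{-1}$ is $Z$-invariant: under $\Phi\colon\Omega\to D$ the $Z$-action is intertwined with left translation by $\bar z=\phi_1(z)$, and $\bar z$ lies in the centre of $D$ --- it commutes with $\phi_1(\tG)$ because $z$ is central in $\tG$ and $\phi_1$ is a homomorphism, and with $\phi_2(G^*)$ by the computation recorded just above the statement, and these subgroups generate the connected group $D$ --- so $l_{\bar z}=r_{\bar z}$ and $\pi_+(\bar z d)=\tfrac12(r_{\bar z d}\pi_0+l_{\bar z d}\pi_0)=(l_{\bar z})_*\pi_+(d)$; this is precisely the assertion, already noted in the text, that $Z$ acts by symplectic groupoid morphisms. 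Since $q$ is a local diffeomorphism, $\pi_+^{-1}$ descends to a closed nondegenerate $2$-form $\bar\omega$ on $\Omega/Z$. The source and target $\alphaG,\betaG$ are $Z$-equivariant for the left-translation action on $\tG$, hence descend to $\bar\alpha,\bar\beta\colon\Omega/Z\to G=\tG/Z$; the unit and inversion descend as well, using $z\cdot\epsilon_G(\tilde g)=\epsilon_G(z\tilde g)$ and $z\cdot\iota_G(x)=\iota_G(z\cdot x)$. For the multiplication one first checks the identity $z\cdot m_G(x,y)=m_G(z\cdot x,z\cdot y)$ for composable $x,y$ --- a one-line computation using formula iv) of Proposition~\ref{sympl_gpd} --- and observes that $q\times q$ maps $\Omega^{(2)}$ onto $(\Omega/Z)^{(2)}$; then, for a composable pair in $\Omega/Z$, one picks representatives $x,y$ with $\betaG(x)=\alphaG(y)$ and sets $[x]\cdot[y]=[m_G(x,y)]$, freeness of the $Z$-action making this independent of the choices.

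It then follows that $\G(G):=\G(\tG)/Z$ is a symplectic groupoid over $G$: the groupoid axioms and the multiplicativity relation $\bar m^*\bar\omega=\bar{\rm pr}_1^*\bar\omega+\bar{\rm pr}_2^*\bar\omega$ hold on $\Omega/Z$ because they hold on $\Omega$ and $q$, hence also $q\times q$, is a local diffeomorphism. That $\G(G)$ integrates $(G,\pi_G)$ is seen from the commutative square formed by the source maps $\alpha_{\tG}\colon\Omega\to\tG$ and $\bar\alpha\colon\Omega/Z\to G$ together with the coverings $q$ and $\tG\to G$: locally $\bar\alpha$ is the composite of the local inverse of $q$ (a symplectomorphism, hence Poisson), of $\alpha_{\tG}$ (Poisson, since $\G(\tG)$ integrates $\tG$), and of $\tG\to G$ (Poisson by the construction of $\pi_{\tG}$), so $\bar\alpha$ is a Poisson map for the pushforward of $\pi_{\tG}$ along $\tG\to G$. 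Since the Poisson structure on $G$ induced by the symplectic groupoid $\G(G)$ is the unique one making $\bar\alpha$ Poisson, it coincides with that pushforward, which by definition of $\pi_{\tG}$ is $\pi_G$.

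The only points that genuinely need care are the properness of the $Z$-action --- needed for $\Omega/Z$ to be a Hausdorff manifold --- and the well-definedness of the descended multiplication, i.e.\ reconciling composability modulo $Z$ in $\Omega/Z$ with composability in $\Omega$; both rely only on freeness and equivariance. Everything else is a formal consequence of $q$ being a covering by symplectic and Poisson local diffeomorphisms.
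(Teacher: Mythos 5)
Your proposal is correct and follows the same route the paper takes (and largely leaves implicit): the discussion preceding the statement establishes exactly that $\phi_1(z)$ commutes with $\phi_2(G^*)$, so that $Z$ acts on $\Omega$ freely, properly and by symplectic groupoid automorphisms covering left translation on $\tG$, and the quotient inherits all structures. Your write-up simply supplies the routine verifications (properness, descent of the multiplication, and the identification of the induced Poisson structure on the base) that the paper omits.
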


In the following we will denote the equivalence classes as $[\tilde{g}_1,\gamma_1,\gamma_2,\tilde{g}_2]\in \G(G)$. Remark
that it can happen that $\phi_1:\tilde{G}\rightarrow D$ satisfies $\phi_1(Z)=1$ so that it descends to $G$. In this case it is possible to define a groupoid as in Proposition \ref{sympl_gpd}, even without assuming that $G$ is simply connected. It is
easily observed that such groupoid is a quotient by $Z$ of the groupoid defined in Proposition \ref{sympl_grpd_non1conn}.

As a simple consequence of Proposition \ref{sympl_gpd}, we have the following corollary.

\begin{cor} \label{grpd_action} The symplectic groupoid $\G(G^*)$ acts simplectically on $\overline{\G(G)}$ with anchor $J:\G(G)\rightarrow G^*$ defined as $J[\tilde{g}_1,\gamma_1,\gamma_2,\tilde{g}_2]=\alphaGs(\tilde{g}_1,\gamma_1,\gamma_2,\tilde{g}_2)=\gamma_2$; the action  $a:\G(G^*){}_\betaGs\!\times_J\G(G)\rightarrow \G(G)$ is given by
$$
a\{(\tilde{k}_1,\lambda_1,\lambda_2,\tilde{k}_2)[\tilde{g}_1,\gamma_1,\lambda_1,\tilde{g}_2]\}=[\tilde{k}_1\tilde{g}_1,\gamma_1,\lambda_2,\tilde{k}_2\tilde{g}_2]\;.
$$
\end{cor}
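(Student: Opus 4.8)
\emph{Proof proposal.}
Assume first that $G$ is simply connected. By Proposition~\ref{sympl_gpd} the groupoids $\G(G)$ and $\G(G^*)$ have the same underlying space $\Omega$ and carry the symplectic forms $\pi_+^{-1}$ and $-\pi_+^{-1}$ respectively, so $\overline{\G(G)}$ — namely $\G(G)$ with the reversed form — \emph{is} the symplectic manifold underlying $\G(G^*)$. The plan is then to recognize the map $a$ as the canonical action of $\G(G^*)$ on itself by left translations, which is automatically symplectic.

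Recall that every Lie groupoid $\G$ acts on its own arrow space by left translation, $(x,y)\mapsto m_\G(x,y)$ defined whenever $\beta_\G(x)=\alpha_\G(y)$, with anchor $\alpha_\G$: the three axioms of a groupoid action become $\alpha_\G(xy)=\alpha_\G(x)$, associativity of $m_\G$, and the unit law $\epsilon_\G(\alpha_\G(y))\,y=y$, all valid in any groupoid. When $\G$ is a symplectic groupoid this action is symplectic, because the graph of left translation coincides with the graph of the multiplication, which is lagrangian in $\G\times\G\times\bar{\G}$ by the very definition of a symplectic groupoid. Applying this to $\G=\G(G^*)$ and using the identification above gives a symplectic action of $\G(G^*)$ on $\overline{\G(G)}$ with anchor $\alpha_{G^*}$, i.e.\ $J[\tilde{g}_1,\gamma_1,\gamma_2,\tilde{g}_2]=\gamma_2$. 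Writing out this translation with the structure map $m_{G^*}$ of Proposition~\ref{sympl_gpd}(iv): composability of $(\tilde{k}_1,\lambda_1,\lambda_2,\tilde{k}_2)$ with $(\tilde{g}_1,\gamma_1,\lambda_1,\tilde{g}_2)$ holds automatically since $\beta_{G^*}(\tilde{k}_1,\lambda_1,\lambda_2,\tilde{k}_2)=\lambda_1=\alpha_{G^*}(\tilde{g}_1,\gamma_1,\lambda_1,\tilde{g}_2)$, and $m_{G^*}$ returns $(\tilde{k}_1\tilde{g}_1,\gamma_1,\lambda_2,\tilde{k}_2\tilde{g}_2)$, which is exactly the claimed formula for $a$; this quadruple again lies in $\Omega$ because, multiplying the defining relations $\overline{\tilde{k}_1}\,\overline{\lambda_1}=\overline{\lambda_2}\,\overline{\tilde{k}_2}$ and $\overline{\tilde{g}_1}\,\overline{\gamma_1}=\overline{\lambda_1}\,\overline{\tilde{g}_2}$ and using that $\phi_1$ is a group homomorphism, one obtains $\overline{\tilde{k}_1\tilde{g}_1}\,\overline{\gamma_1}=\overline{\lambda_2}\,\overline{\tilde{k}_2\tilde{g}_2}$.

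For general $G=\tilde{G}/Z$ one has $\G(G)=\G(\tilde{G})/Z$, with $Z$ acting by $z(\tilde{g}_1,\gamma_1,\gamma_2,\tilde{g}_2)=(z\tilde{g}_1,\gamma_1,\gamma_2,z\tilde{g}_2)$ as in Proposition~\ref{sympl_grpd_non1conn}. The formula for $a$, which is already defined at the level of $\G(\tilde{G})$, passes to $Z$-classes because $Z$ is central in $\tilde{G}$, whence $\tilde{k}_i(z\tilde{g}_i)=z(\tilde{k}_i\tilde{g}_i)$; and since $Z$ acts by symplectomorphisms the descended action is still symplectic. I expect the only genuinely delicate point to be the bookkeeping of conventions — the bar on $\G(G)$, and the source/target and left/right choices — which must be aligned so that the anchor comes out as $\alpha_{G^*}$ and $a$ is a left action; no substantive computation is involved.
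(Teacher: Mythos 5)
Your proposal is correct and follows essentially the same route as the paper: the paper's one-line proof is precisely the observation that the graph of $a$ is the $Z$-quotient of the graph of the multiplication of $\G(G^*)$, which is lagrangian by definition of a symplectic groupoid. Your version merely spells out the intermediate steps (the identification of $\overline{\G(G)}$ with the underlying symplectic manifold of $\G(G^*)$, the closure of $\Omega$ under the product, and the descent through the central $Z$-action), all of which are implicit in the paper's argument.
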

\begin{proof}
Simply observe that the graph of the action $a$ is the quotient under the action of $Z$ of the graph of the multiplication of $\G(G^*)$.
\end{proof}

In particular we have that $J:(\G(G),\pi_+)\rightarrow (G^*,\pi_{\mbox{\tiny G}^*})$ is an anti-Poisson map.

\bigskip\bigskip

\section{The symplectic groupoid of a homogeneous space}

In this section we discuss the integration of Poisson homogeneous spaces of the Poisson Lie group $G$.

Let us start with the simplest case of a Lie group $G$ with the zero Poisson structure $\pi_G=0$. Its symplectic groupoid $\G(G)=G\times \mathfrak{g}^\star$ is identified with the cotangent bundle after trivializing via left
translations. The left multiplication of $G$ on itself admits a cotangent lift $k(g,\xi)=(kg,\xi)$ for $k,g\in G$ and $\xi\in
\mathfrak{g}^\star$; this lifted action is hamiltonian with momentum map $J(g,\xi)=\Ad^*_g(\xi)$. Let now $H$ be any closed
subgroup of $G$ with Lie algebra $\mathfrak h$. The restriction to $H$ of the lifted action is again obviously hamiltonian with
momentum map $\JH={\rm pr}_H\circ J$, where ${\rm pr}_H:{\mathfrak g}^\star\rightarrow {\mathfrak g}^\star/{\mathfrak h}^\perp$ and $\mathfrak{h}^\perp = \{ \xi \in g^\star:~ \langle\xi, X\rangle=0, \forall X \in \fh \}$. 
The symplectic groupoid of the quotient is just the Marsden--Weinstein reduction of this hamiltonian action: $T^*(H\backslash
G)=H\backslash \JH^{-1}(0)$.

We are going to see how this generalizes to a generic Poisson Lie group. We know that any action by Poisson diffeomorphism on an
integrable Poisson manifold can be lifted to a hamiltonian action on the (ssc) symplectic groupoid with a multiplicative momentum
map, see \cite{MW,FOR}. This construction applies only to the case $H$ being a Poisson subgroup with zero Poisson structure; it is
clear that in the general case one has to consider generalized notions of hamiltonian actions and even in the generalized setting
we will consider, the lifting property will not be automatic.

\subsection{Embeddable homogeneous spaces}
Let us recall some basic facts about coisotropic subgroups and their role in the quotient of Poisson manifolds. Let $H$ be a coisotropic connected closed subgroup of the Poisson Lie group $G$ and $\mathfrak{h} = Lie~H$. At the infinitesimal level, coisotropic subgroups are characterized by the following Proposition, whose proof can be found in \cite{STT}.

\begin{prop} \label{thm_coi_alg}
A subgroup $H$ is coisotropic if and only if $\mathfrak{h}^\perp \subset \fg^\star$ is a subalgebra of $\fg^\star$, where $\mathfrak{h}^\perp = \{ \xi \in g^\star:~ \langle\xi, X\rangle=0, \forall X \in \fh \}$.
\end{prop}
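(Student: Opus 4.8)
The statement is an infinitesimal characterization of coisotropy, so the natural strategy is to translate the submanifold condition $\pi_G^\sharp(N^*H) \subset TH$ into a statement about the Lie algebra $\mathfrak g$ and its dual, using the multiplicativity of $\pi_G$. First I would trivialize everything by left translations: since $H$ is a subgroup containing $e$, it suffices to check the coisotropy condition at the identity (and then propagate it along $H$ using left-invariance of the relevant structures together with \eqref{multiplicativity}). At $e\in H$ one has $T_eH = \mathfrak h$ and the conormal space $N_e^*H = \mathfrak h^\perp \subset \mathfrak g^\star = T_e^*G$. Because $\pi_G(e)=0$ by multiplicativity, the bivector itself vanishes at $e$, so $\pi_G^\sharp(e)=0$ and the condition is vacuous at the single point $e$; the content lives in the first-order behaviour of $\pi_G$ near $e$, i.e.\ in the cobracket $\delta = d_e\pi_G : \mathfrak g \to \Lambda^2\mathfrak g$, equivalently in the dual bracket $[\,,\,]_{\mathfrak g^\star}$.

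\textbf{Key steps.} The cleanest route is to work at a general point $h\in H$ and use the formula $r_{h}^{-1}\pi_G(h)$, which by multiplicativity and the first formula of \eqref{poisson_tensors} is controlled by $\Ad_{\bar h^{-1}}$ acting on $\mathfrak g^\star$. Concretely: pick $\xi \in \mathfrak h^\perp = N_e^*H$, left-translate to get a conormal covector along $H$, and compute the pairing $\langle \pi_G^\sharp(l_h^*\xi), l_h^*\eta\rangle$ for $\eta \in \mathfrak g^\star$; one wants to show this vanishes for all $\eta$ such that $l_h^*\eta$ is conormal to $H$ at $h$ — equivalently, show $\pi_G^\sharp$ of a conormal covector lands in $T_hH = l_h \mathfrak h$. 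Differentiating at $h=e$ along a one-parameter subgroup $\exp(tX)$ with $X\in\mathfrak h$: the derivative of $\pi_G(\exp tX)$ at $t=0$ is $\delta(X)\in\Lambda^2\mathfrak g$, and $\langle \delta(X), \xi\wedge\eta\rangle = \langle X, [\xi,\eta]_{\mathfrak g^\star}\rangle$ by definition of $\delta$ as the transpose of the dual bracket. So the condition $\pi_G^\sharp(\xi) \in \mathfrak h$ modulo higher order, for all $\xi\in\mathfrak h^\perp$, unwinds exactly to: $\langle X, [\xi,\eta]_{\mathfrak g^\star}\rangle = 0$ for all $X\in\mathfrak h$, all $\xi\in\mathfrak h^\perp$, and all $\eta\in\mathfrak h^\perp$ (since $\eta\in\mathfrak h^\perp$ is precisely the condition that $\eta$ pairs to zero against $TH$). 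That is to say $[\xi,\eta]_{\mathfrak g^\star} \in \mathfrak h^\perp$, i.e.\ $\mathfrak h^\perp$ is a subalgebra of $\mathfrak g^\star$. For the converse one runs the same computation and uses that if $\mathfrak h^\perp$ is a subalgebra then the derivative condition holds at $e$; then one promotes this to the condition at all $h\in H$ by using multiplicativity \eqref{multiplicativity} to write $\pi_G(h)$ in terms of left/right translates and the $\Ad$-action of $D$, together with the fact that $H$ being a subgroup makes $\mathfrak h$ and hence the conormal bundle left-$H$-invariant along $H$.

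\textbf{Main obstacle.} The subtle point is not the linear-algebra identity $\langle \delta(X),\xi\wedge\eta\rangle = \langle X,[\xi,\eta]_{\mathfrak g^\star}\rangle$ — that is essentially the definition — but rather showing that checking the coisotropy inclusion at $e$ (to first order) is \emph{equivalent} to checking it everywhere on $H$. Since $\pi_G$ is not left-invariant (it is only multiplicative), one cannot simply transport the $e$-condition by a group translation; instead one must exploit the precise form of multiplicativity. The standard fix is: for $h\in H$ and $X\in\mathfrak h$, note $\frac{d}{dt}\big|_0 \pi_G(h\exp tX) = l_h(\delta(X)) + (\text{terms from }r_{\exp tX}\pi_G(h))$, and observe that the second group of terms is tangent to $H$ automatically if we already know $\pi_G^\sharp(N^*H)\subset TH$ at $h$; this sets up an ODE/connectedness argument on $H$ (using that $H$ is connected) showing the infinitesimal condition at $e$ propagates. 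Alternatively — and more in the spirit of \cite{STT} — one uses the characterization that $H$ is coisotropic iff the conormal bundle $N^*H$ is a Lie subalgebroid of $T^*G$ with the cotangent Lie algebroid structure of the Poisson manifold $G$, and then restricts to the fiber over $e$, where the algebroid bracket on left-invariant sections restricts to the dual bracket $[\,,\,]_{\mathfrak g^\star}$ on $\mathfrak h^\perp$. I would present the argument in this second form if a clean reference to the cotangent algebroid is available, and otherwise fall back on the explicit one-parameter-subgroup computation above, citing \cite{STT} for the propagation step.
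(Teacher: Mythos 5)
The paper does not actually prove this proposition: the sentence introducing Proposition \ref{thm_coi_alg} defers the proof to \cite{STT}, so there is no in-paper argument to compare yours against. Your plan is nevertheless the standard one and its two halves are correctly identified: the direction ``coisotropic $\Rightarrow$ subalgebra'' follows by differentiating the coisotropy condition at $e$ along $\exp tX$, $X\in\mathfrak h$, and using $\langle\delta(X),\xi\wedge\eta\rangle=\langle X,[\xi,\eta]_{\mathfrak g^\star}\rangle$; the converse is exactly where multiplicativity must be used to propagate the first-order condition at $e$ to all of $H$.

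That propagation step is the one place your write-up needs tightening, since as phrased (``the second group of terms is tangent to $H$ automatically if we already know $\pi_G^\sharp(N^*H)\subset TH$ at $h$'') it reads circularly. The clean version: set $\Pi(h)=r_{h^{-1}}\pi_G(h)\in\Lambda^2\mathfrak g$, so that (\ref{multiplicativity}) becomes the cocycle identity $\Pi(h_1h_2)=\Ad_{h_1}\Pi(h_2)+\Pi(h_1)$. Since $T_hH=r_h\mathfrak h$ and $\Ad_h\mathfrak h=\mathfrak h$ for $h\in H$, one has $N^*_hH=r^*_{h^{-1}}\mathfrak h^\perp$, and coisotropy is equivalent to $\Pi(h)\in\mathfrak h\wedge\mathfrak g$ for all $h\in H$; moreover $\mathfrak h\wedge\mathfrak g$ is $\Ad_H$-stable. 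Hence $\bar\Pi:H\rightarrow\Lambda^2(\mathfrak g/\mathfrak h)$ is a group cocycle whose differential at $e$ is $\delta|_{\mathfrak h}$ modulo $\mathfrak h\wedge\mathfrak g$, and the subalgebra hypothesis says precisely that this vanishes. Then $\tfrac{d}{dt}\,\bar\Pi(h\exp tX)=\overline{\Ad_{h\exp tX}\,\delta(X)}=0$, so $\bar\Pi\equiv\bar\Pi(e)=0$ on the connected group $H$; connectedness, which the paper does assume for $H$, is genuinely needed here, and this computation simultaneously gives the forward direction by reading it at $e$. Finally, a caution on your proposed alternative route: in the equivalence ``$C$ coisotropic iff $N^*C$ is a Lie subalgebroid of $T^*G$'' the anchor condition $\pi_G^\sharp(N^*C)\subset TC$ is part of the definition of subalgebroid, so reducing the problem to closure of the left-invariant conormal one-forms under the Koszul bracket proves less than the stated equivalence unless you also track the anchor; the explicit cocycle computation above is the safer of your two options.
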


\noindent{\bf Assumption}: Let $\mathfrak{h}^\perp$ be integrated by a closed subgroup $H^\perp \subset G^*$.

\smallskip
As a consequence of Proposition \ref{thm_coi_alg}, $H^\perp$ results coisotropic as well. The following property of coisotropic subgroups will be relevant in what follows.

\begin{lem} \label{lem_h_hperp_inacca}
Given a coisotropic subgroup $H \subset G$, then the restriction of the (infinitesimal) dressing actions of $G$ on $G^*$ to $H$ leaves $H^\perp$ invariant and its orbits are the coisotropic leaves. Moreover, when $H$ is a Poisson subgroup, then the dressing action of $H^\perp$ on $H$ is trivial.
\end{lem}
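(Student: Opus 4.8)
The plan is to prove the two assertions of Lemma \ref{lem_h_hperp_inacca} separately at the infinitesimal level, using the explicit description of the dressing vector fields in Lemma \ref{lem_fund_vec} together with the algebraic characterization of coisotropic subgroups in Proposition \ref{thm_coi_alg}.

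For the first assertion, I would proceed as follows. The (infinitesimal) left dressing action of $G$ on $G^*$ restricted to $H$ is generated by the vector fields ${\mathcal S}_{X}(\gamma) = \pi_{G^*}^\sharp(r_{\gamma^{-1}}^* X)$ for $X \in \fh$. I want to show that ${\mathcal S}_{X}(\gamma) \in T_\gamma H^\perp$ whenever $\gamma \in H^\perp$. Working at the Lie-algebra level, $T_\gamma H^\perp = l_\gamma \fh^\perp$ (using right or left translation — I would use the formula ${\mathcal S}_{X}(\gamma) = l_\gamma \pgs(\Ad_{\oga^{-1}} X)$ from the proof of Lemma \ref{lem_fund_vec}), so it suffices to check that $\pgs(\Ad_{\oga^{-1}} X) \in \fh^\perp$. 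Pairing with an arbitrary $Y \in \fh$, one has $\langle \pgs(\Ad_{\oga^{-1}} X), Y\rangle = \langle \Ad_{\oga^{-1}} X, Y\rangle = \langle X, \Ad_{\oga} Y\rangle$ using invariance of the pairing on $\fd$. So I must show $\pg(\Ad_{\oga} Y) \in \fh$ for $\gamma \in H^\perp$, $Y \in \fh$ — i.e., that the right dressing action of $H^\perp$ on $G$ preserves $H$ infinitesimally. This is dual to the original claim, so the two halves close together: the statement to prove is the symmetric pair "$\fh$ is invariant under the infinitesimal right dressing of $\fh^\perp$ and $\fh^\perp$ is invariant under the infinitesimal left dressing of $\fh$", and I would prove both simultaneously by differentiating at the identity and reducing to a bracket computation. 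Concretely, differentiating ${\mathcal S}_X$ at $\gamma = e$ in the direction $\xi \in \fh^\perp$ produces, up to the $\pgs$ projection, the $\fg^\star$-component of $[\xi, X]_\fd$; from the bracket formula (\ref{bracket_double}), $[\xi, X] = \ad^*_\xi X - \ad^*_X \xi$ with $\ad^*_\xi X \in \fg$ and $\ad^*_X \xi \in \fg^\star$, so $\pgs[\xi,X] = -\ad^*_X\xi$, and I need $\ad^*_X \xi \in \fh^\perp$ for $X \in \fh$, $\xi \in \fh^\perp$: this says exactly that $\fh^\perp$ is stable under $\ad^*_X$ for $X \in \fh$, which follows because $\langle \ad^*_X \xi, Y\rangle = -\langle \xi, [X,Y]\rangle = 0$ for $Y \in \fh$ (using that $\fh$ is a subalgebra). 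Then I would bootstrap from the infinitesimal statement at $e$ to all of $H^\perp$ using that $H^\perp$ is connected (generated by exponentials) and that the dressing action is an action, i.e. the flow of ${\mathcal S}_X$ stays in the leaf. Finally, to identify the orbits with the coisotropic leaves: the coisotropic leaves of $H^\perp$ (as a coisotropic submanifold of $G^*$) are by definition the integral leaves of the distribution $\pi_{G^*}^\sharp(N^* H^\perp)$, and $N^*_\gamma H^\perp = (l_{\gamma^{-1}})^* \mathrm{Ann}(\fh^\perp) = (l_{\gamma^{-1}})^*$ of the image of $\fh$ in $(\fg^\star)^\star \cong \fg$; comparing with the expression ${\mathcal S}_X(\gamma) = \pi_{G^*}^\sharp(r_{\gamma^{-1}}^* X)$ and being slightly careful about left vs.\ right conormal identifications, the dressing distribution on $H^\perp$ coincides with $\pi_{G^*}^\sharp(N^*H^\perp)$, hence the $H$-orbits are the coisotropic leaves.

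For the second assertion, suppose $H$ is a Poisson subgroup, so that $\pi_G(h) \in \Lambda^2 T_h H$ for all $h \in H$, equivalently (by multiplicativity) $l_{h^{-1}}\pi_G(h) \in \Lambda^2 \fh$. The infinitesimal right dressing action of $H^\perp$ on $G$ is generated by ${\mathcal S}_\xi(g) = -\pi_G^\sharp(l_{g^{-1}}^* \xi)$ for $\xi \in \fh^\perp$. Restricting to $g = h \in H$: since $l_{h^{-1}}\pi_G(h) \in \Lambda^2 \fh$, the covector $l_h^* \xi$ restricted to $\fh$ controls the output, but $\xi \in \fh^\perp$ means $\langle l_h^*\xi, \cdot\rangle$ — I need to be careful. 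The cleanest route: a Poisson subgroup is in particular coisotropic, and the dressing action of $H^\perp$ on $H$ being trivial means ${\mathcal S}_\xi(h) = 0$ for all $\xi \in \fh^\perp$, $h \in H$. Now ${\mathcal S}_\xi(h) = -\pi_G^\sharp(l_{h^{-1}}^*\xi) = -l_h\left(\left(l_{h^{-1}}\pi_G(h)\right)^\sharp(\xi|_{\fh})\right)$ where I contract the element $l_{h^{-1}}\pi_G(h) \in \Lambda^2\fh$ against $\xi$; but since $\xi \in \fh^\perp$, its restriction to $\fh$ is zero, and a bivector in $\Lambda^2\fh$ contracted against a covector annihilating $\fh$ gives $0$. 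Hence ${\mathcal S}_\xi(h) = 0$, so the action is trivial on $H$. (One should double-check: the sharp $\pi_G^\sharp(l_{h^{-1}}^*\xi)$ uses $l_{h^{-1}}\pi_G(h)$ paired with $l_h^*$ applied to $\xi$ — pushing everything to $\fh$ by left translation, the bivector lies in $\Lambda^2\fh$ and is paired with the restriction of $\xi$ to $\fh$, which vanishes.)

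The main obstacle I anticipate is the bookkeeping in the identification of the orbits with the coisotropic leaves — specifically keeping track of the left- versus right-translation conventions in the formulas ${\mathcal S}_X(\gamma) = \pi_{G^*}^\sharp(r_{\gamma^{-1}}^* X) = l_\gamma\pgs(\Ad_{\oga^{-1}}X)$ and making sure the conormal bundle $N^* H^\perp$ is trivialized consistently, so that the dressing distribution and $\pi_{G^*}^\sharp(N^*H^\perp)$ literally agree as distributions on $H^\perp$ rather than just having the same rank. Everything else reduces to the invariance computations $\langle\ad^*_X\xi, Y\rangle = -\langle\xi,[X,Y]\rangle$ together with the standard fact that a connected subgroup's invariant submanifold is detected infinitesimally, which is routine given Proposition \ref{thm_coi_alg} and Lemma \ref{lem_fund_vec}.
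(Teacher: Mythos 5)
Your closing observation---that $N^*_\gamma H^\perp$ is the right (equivalently left, since $H^\perp$ is a subgroup) translate $r^*_{\gamma^{-1}}\mathfrak{h}$ of $\mathfrak{h}\subset(\mathfrak{g}^\star)^\star$, so that the distribution spanned by $\{\mathcal{S}_X(\gamma)=\pi_{G^*}^\sharp(r^*_{\gamma^{-1}}X):X\in\mathfrak{h}\}$ is \emph{literally} $\pi_{G^*}^\sharp(N^*_\gamma H^\perp)$---is the paper's entire proof of the first assertion, and it makes everything that precedes it unnecessary. Indeed $H^\perp$ is itself coisotropic in $G^*$ (Proposition \ref{thm_coi_alg} applied to $G^*$, since $(\mathfrak{h}^\perp)^\perp=\mathfrak{h}$ is a subalgebra), so $\pi_{G^*}^\sharp(N^*H^\perp)\subset TH^\perp$ by the very definition of coisotropic submanifold: tangency and the identification of the orbits with the coisotropic leaves come simultaneously and for free. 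The paper also gets the second assertion from the same identification (applied to the right dressing action on $H\subset G$) plus the remark that the coisotropic foliation of a Poisson submanifold is null; your direct verification that $\mathcal{S}_\xi(h)=-\pi_G^\sharp(l^*_{h^{-1}}\xi)=0$ because $l_{h^{-1}}\pi_G(h)\in\Lambda^2\mathfrak{h}$ and $\xi|_{\mathfrak{h}}=0$ is a correct equivalent of this.

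The computational route you actually lead with for the invariance claim, however, has a genuine gap. First, the pairing step is circular: since $X\in\mathfrak{g}$, the quantity $\langle X,\Ad_{\oga}Y\rangle$ equals $(\pgs\Ad_{\oga}Y)(X)$, so "vanishing for all $X,Y\in\mathfrak{h}$'' is the original condition with $\gamma$ replaced by $\gamma^{-1}$, not the condition $\pg(\Ad_{\oga}Y)\in\mathfrak{h}$ that you write down. Second, and more seriously, the "bootstrap'' from the infinitesimal statement at $e$ to all of $H^\perp$ is the entire content of the claim and is not justified by connectedness plus "the flow stays in the leaf''. To make it work one shows that $\ad_\xi$, $\xi\in\mathfrak{h}^\perp$, preserves the isotropic subalgebra $\mathfrak{l}=\mathfrak{h}\oplus\mathfrak{h}^\perp\subset\mathfrak{d}$, so that $\Ad_{\oga^{-1}}\mathfrak{l}=\mathfrak{l}$ for $\gamma$ in the connected group $H^\perp$ and $\langle\Ad_{\oga^{-1}}X,Y\rangle\in\langle\mathfrak{l},\mathfrak{l}\rangle=0$. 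That requires not only $\ad^*_X\xi\in\mathfrak{h}^\perp$ (the only thing your bracket computation checks, using that $\mathfrak{h}$ is a subalgebra) but also $\ad^*_\xi X\in\mathfrak{h}$, which holds precisely because $\mathfrak{h}^\perp$ is a subalgebra---that is, because $H$ is coisotropic. As written, your infinitesimal argument never invokes the coisotropy hypothesis, which is a reliable sign that it cannot yet yield the finite statement.
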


\begin{proof}
Since for any $\gamma\in H^\perp$ we can characterize $T_\gamma H^\perp$ as $r_\gamma\h^\perp$, we get that $N^*_\gamma H^\perp=r^*_{\gamma^{-1}}\h$. Then the dressing vector fields $\mathcal{S}_{X} (\ga) = \pi_{\mbox{\tiny G}^*}^\sharp (r_{\ga^{-1}}^* X)$ corresponding to $X\in\h$ (see Lemma \ref{lem_fund_vec}) span the coisotropic distribution and, in particular, are tangent to $H^\perp$.
Analogously for the right dressing action. To prove the last statement, let us recall
that the coisotropic foliation of a Poisson submanifold is null. \end{proof}

\begin{cor}
If $H$ is a Poisson subgroup, then $H$ acts on $H^\perp$ by automorphisms.
\end{cor}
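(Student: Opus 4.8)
The plan is to derive the claim directly from Lemma \ref{lem_h_hperp_inacca} and the intertwining identities (\ref{properties_dressing}). A Poisson subgroup is in particular coisotropic, so Proposition \ref{thm_coi_alg} applies and $\mathfrak{h}^\perp$ is a subalgebra; by the first part of Lemma \ref{lem_h_hperp_inacca} the left dressing action of $G$ on $G^*$ restricts to an action of $H$ on $H^\perp$, and this is the action in the statement. What remains is to show that for each $h\in H$ the dressing transformation $\sigma_h:\gamma\mapsto{}^h\gamma$ is an automorphism of the group $H^\perp$.

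First I would take $h\in H$ and $\gamma_1,\gamma_2\in H^\perp$ and plug $g=h$ into the second identity of (\ref{properties_dressing}):
\[
{}^h(\gamma_1\gamma_2)={}^h\gamma_1\,\cdot\,{}^{h^{\gamma_1}}\gamma_2 .
\]
Since $H$ is a Poisson subgroup, the last statement of Lemma \ref{lem_h_hperp_inacca} says that the right dressing action of $H^\perp$ on $H$ is trivial, hence $h^{\gamma_1}=h$ and therefore ${}^h(\gamma_1\gamma_2)={}^h\gamma_1\cdot{}^h\gamma_2$. Thus $\sigma_h$ is a group homomorphism $H^\perp\to H^\perp$; as the $h$-component of a group action it admits $\sigma_{h^{-1}}$ as inverse, so it is a (Lie group) automorphism, and $h\mapsto\sigma_h$ is an action of $H$ on $H^\perp$ by automorphisms.

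The only subtlety worth flagging is completeness: when the dressing vector fields are not complete the dressing action of $G$ on $G^*$ exists only locally, so (\ref{properties_dressing}) and the argument above must be read as statements about germs, giving an action of $H$ by local automorphisms of $H^\perp$. Equivalently, one may phrase everything infinitesimally and check, using the formula $\mathcal{S}_X(\gamma)=l_\gamma\,\pgs(\Ad_{\oga^{-1}}X)$ from Lemma \ref{lem_fund_vec} and the $H$-invariance of $H^\perp$, that for $X\in\mathfrak{h}$ the vector field $\mathcal{S}_X|_{H^\perp}$ is multiplicative in the sense of (\ref{multiplicativity}) --- the infinitesimal form of acting by automorphisms. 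I do not anticipate a real obstacle: the essential input, the vanishing of the $H^\perp$-dressing action on a Poisson subgroup, is already contained in Lemma \ref{lem_h_hperp_inacca}, and the rest is bookkeeping with the matched-pair identities.
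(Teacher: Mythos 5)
Your proof is correct and follows essentially the same route as the paper: apply the second intertwining identity of (\ref{properties_dressing}) with $g=h$, use the triviality of the $H^\perp$-dressing action on a Poisson subgroup from Lemma \ref{lem_h_hperp_inacca} to replace $h^{\gamma_1}$ by $h$, and handle the non-complete case by passing to the infinitesimal statement that the dressing vector fields of $\mathfrak{h}$ on $H^\perp$ are multiplicative. Your remarks on the local/infinitesimal reading are exactly the caveat the paper itself makes.
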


\begin{proof}
Let us prove it in the complete case. Let $h\in H$, $\gamma_i\in H^\perp$. By the properties (\ref{properties_dressing}) of the dressing action, ${}^h (\ga_1 \ga_2) = {}^h \ga_1 ~~ {}^{h^{\ga_1}} \ga_2$. As $h^{\ga_1} = h$ by means of Lemma \ref{lem_h_hperp_inacca}, then ${}^h (\ga_1 \ga_2) = {}^h \ga_1 ~~ {}^h \ga_2$. In the general case, infinitesimal action by automorphisms means that the dressing vector fields of $H$ on $H^\perp$ are multiplicative.
\end{proof}

 Let us denote with $\tilde{H}$, $\tilde{G}$ the universal covers of $H$ and $G$ respectively and let $\phi_{\tilde{H}}:\tilde{H}\rightarrow \tG$ the group homomorphism integrating the inclusion $\mathfrak{h}\rightarrow\mathfrak{g}$. Then let us define
\begin{equation}\label{subgroupoidofH}
 \Omega(\tilde{H},H^\perp) = \{(\phi_{\tilde H}(\tilde{h}_1),\gamma_1,\gamma_2,\phi_{\tilde H}(\tilde{h}_2))\in\G(\tG),\; \tilde{h}_i\in\tilde{H},\, \gamma_i\in H^\perp\}\subset \Omega\;.
\end{equation}
It is clear that $\Omega(\tilde{H},H^\perp)$ defines a subgroupoid $\G(\tG,\phi_{\tilde{H}}({\tilde H}))$ over $\phi_{\tilde H}({\tilde H})$ of $\G(\tG)$ and a subgroupoid $\G(G^*,H^\perp)$ over $H^\perp$ of $\G(G^*)$. They respectively integrate the subalgebroids $N^*\phi_{\tilde{H}}({\tilde H})\subset T^*\tG$ and $N^*H^\perp\subset T^*G^*$.

The following theorem establishes the role of coisotropic subgroups in the quotient of Poisson manifolds (the proof can be found in \cite{LuTh}).

\begin{thm}
\label{thm_poi_red}
Let $\sigma: K \times P \rightarrow P$ be a Poisson action of the Poisson Lie group $K$ over the Poisson manifold $P$ and let $B \subset K$ be a coisotropic subgroup of $K$. If the orbit space $B \backslash P$ is a smooth manifold, then there is a unique Poisson structure on $B \backslash P$ such that the natural projection map $P \rightarrow B \backslash P$ is Poisson.
\end{thm}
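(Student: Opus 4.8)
The plan is to reduce the statement to a standard fact about Poisson actions restricted to the leaves of the coisotropic distribution induced by $B$. First I would observe that, since $B$ is a coisotropic subgroup of $K$, its conormal space $\mathfrak{b}^\perp\subset\mathfrak{k}^\star$ is a Lie subalgebra (Proposition \ref{thm_coi_alg}). The infinitesimal action $\sigma$ produces, for each $\xi\in\mathfrak{b}^\perp$, a vector field on $P$; I would check that the Poisson-action identity, combined with the fact that $\mathfrak{b}^\perp$ is a subalgebra, forces the distribution $\mathcal{D}$ on $P$ spanned by $\{\pi_P^\sharp(\text{images of }\mathfrak{b}^\perp)\}$ together with the $B$-orbit directions to behave well: in fact I would show that the $B$-orbit foliation on $P$ \emph{is} the coisotropic foliation associated with a coisotropic submanifold, or more directly work with the quotient map $q:P\to B\backslash P$ and the presymplectic/Poisson structure it carries.

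The key steps, in order, would be: (1) rewrite the Poisson-action condition $\pi_P(\sigma(k,p))=k_*\pi_P(p)+p_*\pi_K(k)$ infinitesimally to get a Lie-algebra-cohomological identity relating the fundamental vector fields, $\pi_P$, and the cocycle $\delta$ dual to $[,]_{\mathfrak{k}^\star}$; (2) use this to prove that, on $P$, the annihilator $\mathcal{D}^\circ\subset T^*P$ of the $B$-orbit distribution is mapped by $\pi_P^\sharp$ into the $B$-orbit distribution itself --- i.e. the $B$-orbits form a coisotropic foliation in the sense recalled in Section 2; (3) invoke the general statement from Section 2 that the leaf space of a coisotropic foliation, when smooth, inherits a unique Poisson structure for which the projection is Poisson. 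Here $B\backslash P$ is exactly that leaf space (the $B$-orbits being connected is harmless, or one passes to the identity component of $B$ and notes $B/B_0$ acts by Poisson diffeomorphisms). Uniqueness is immediate because $q$ is a surjective submersion, so the Poisson bracket on $B\backslash P$ is determined by $\{f\circ q,g\circ q\}_P$ for $B$-invariant $f,g$; one only needs that this bracket of invariants is again invariant and basic, which is precisely what the coisotropy computation in step (2) delivers.

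The main obstacle I expect is step (2): showing that $\pi_P^\sharp$ sends the conormal bundle of the $B$-orbit foliation into the tangent bundle of that foliation. For a single coisotropic \emph{submanifold} this is the definition, but here one has a foliation by orbits of a group that is itself Poisson-nontrivial, so the naive argument ``$\pi_P^\sharp(N^*(\text{orbit}))\subset T(\text{orbit})$'' needs the twisting term $p_*\pi_K(k)$ to be absorbed correctly --- this is exactly where $\mathfrak{b}^\perp$ being a \emph{subalgebra} (not merely a subspace) is used, since the bracket on $\mathfrak{b}^\perp$ controls how $\pi_P^\sharp$ of conormal directions fails to be tangent. Concretely, for $\xi,\eta\in\mathfrak{b}^\perp$ one computes $[\mathcal{S}_\xi,\mathcal{S}_\eta]$ on $P$ and finds it equals $\mathcal{S}_{[\xi,\eta]_{\mathfrak{k}^\star}}$ up to $B$-orbit-tangent terms; closure of $\mathfrak{b}^\perp$ then keeps everything inside the orbit distribution. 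Once this is in place the rest is a direct application of the Section 2 material, and the remaining verifications (smoothness is assumed; uniqueness follows from surjectivity of $q$) are routine.
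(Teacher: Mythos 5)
The paper does not actually prove Theorem \ref{thm_poi_red}; it defers to \cite{LuTh}, where the argument is the one your step (1) sets up, namely that the $B$-invariant functions form a Poisson subalgebra of $C^\infty(P)$. Your step (1), and your diagnosis of where the hypothesis ``$\mathfrak{b}^\perp$ is a subalgebra'' must enter, are both correct. The genuine gap is step (2): the claim that $\pi_P^\sharp$ maps the annihilator of the $B$-orbit distribution into the orbit distribution, i.e.\ that \emph{every} $B$-orbit is a coisotropic submanifold of $P$, is false in general, and it is strictly stronger than what is needed. Indeed, the coinduced bracket at $q(p)$ is by definition $\langle\pi_P(p),df_1\wedge df_2\rangle$ with $f_1,f_2$ invariant, and $df_i(p)$ lies exactly in the conormal space of the orbit through $p$; so if every orbit were coisotropic the quotient Poisson structure would vanish identically. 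This contradicts the situations the theorem is designed for: for $P=K=G$ acting on itself and $B=H$ coisotropic, the quotient $H\backslash G$ has nontrivial Poisson structure away from $\underline{e}$ (e.g.\ the disc of subsection \ref{example_su11}), so only the coset through $e$ is coisotropic. One can see this directly: for conormal covectors $\omega_i=r_{g^{-1}}^*l_{h^{-1}}^*\xi_i$ at $hg\in Hg$ with $\xi_i\in\mathfrak{h}^\perp$, multiplicativity (\ref{multiplicativity}) gives $\langle\pi_G(hg),\omega_1\wedge\omega_2\rangle=\langle r_{g^{-1}}\pi_G(g),\xi_1\wedge\xi_2\rangle$ (the $\pi_G(h)$ term dies by coisotropy of $H$), and by (\ref{poisson_tensors}) this is generically nonzero even for $\xi_i\in\mathfrak{h}^\perp$. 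Consequently step (3) also has nothing to invoke: the reduction recalled in Section 2 concerns the characteristic foliation \emph{inside} a single coisotropic submanifold, not a foliation of $P$ by coisotropic leaves, and a foliation with Poisson-closed basic functions need not have coisotropic leaves.

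What must be proved instead is the weaker statement that $\{f_1,f_2\}$ is again $B$-invariant when $f_1,f_2$ are: its restriction to an orbit need only be constant, not zero. Writing the Poisson-action condition infinitesimally as ${\mathcal L}_{\sigma_X}\pi_P=\pm\,\sigma^{(\cdot)}_*\delta(X)$ for $X\in\mathfrak{b}$ (your step (1)), one gets $\sigma_X\{f_1,f_2\}=\{\sigma_Xf_1,f_2\}+\{f_1,\sigma_Xf_2\}\pm\langle\delta(X),\xi_1\wedge\xi_2\rangle$, where $\xi_i\in\mathfrak{k}^\star$ is defined by $\langle\xi_i,Y\rangle=(\sigma_Yf_i)(p)$. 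For invariant $f_i$ the first two terms vanish and $\xi_i\in\mathfrak{b}^\perp$; since $\delta=[\,,\,]_{\mathfrak{k}^\star}^*$, the last term equals $\pm\langle X,[\xi_1,\xi_2]_{\mathfrak{k}^\star}\rangle$, which vanishes precisely because $\mathfrak{b}^\perp$ is a subalgebra and $X\in\mathfrak{b}$. Connectedness of $B$ upgrades infinitesimal invariance to invariance, and uniqueness is, as you say, immediate from surjectivity of the submersion $q$. So the subalgebra hypothesis is used exactly where you located it, but to prove invariance of the bracket of invariants rather than coisotropy of the orbits; as written, your step (2) fails and the argument does not go through without this repair.
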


Let $H$ be a coisotropic subgroup of the Poisson Lie group $G$; if we apply this result to $P=K=G$, $B=H$ and to $P=K=G^*$, $B=H^\perp$ we conclude that both $H\backslash G$ and $G^*/H^\perp$ are Poisson manifolds.
Borrowing the terminology from quantum groups we call them {\it embeddable Poisson homogeneous spaces}, since $H$-invariant functions on $G$ are a Poisson subalgebra of $C^\infty(G)$. Let $\ph : G \rightarrow H \backslash G$ and $\php: G^*\rightarrow G^*/H^\perp$ the projection maps and let us denote $\ph(g)=\underline{g}$ and $\php(\gamma)=\underline{\gamma}$.
Embeddable Poisson homogeneous spaces come with a distinct point, the image $\underline{e}$ of the identity $e$, where the coinduced Poisson structure vanishes. Indeed, they can be characterized as those having at least one point where the Poisson structure vanishes, or equivalently the stability group of such a point is coisotropic.

\subsection{The complete and simply connected case}

Let us assume that $G$ is simply connected and complete. Let us review the concept of symplectic reduction via Lu's momentum map.

\begin{dfn} \label{dfn_momap}
 A $C^{\infty}$ map $J: P \rightarrow G^*$ is called a momentum mapping for the left Poisson action $\sigma: G\times P\rightarrow P $ if
 \[
\sigma_{\mbox{\tiny X}} ~ = ~ - \pi_{\mbox{\tiny P}}^\sharp
 (J^{*}(\cx)), ~~~ \forall X \in \la.
\]
where for each $X \in \la$, $\cx$ is the right invariant $\onf$ on $\Gs$ with value $X$ at $e$, and $\sigma_{\mbox{\tiny X}}$ is the fundamental vector field associated to $X$ by the action $\sigma$. The momentum mapping $J$ is said to be equivariant if $J(\sigma(g,p))={}^g(J(p))$, for any $g\in G$ and $p\in P$.
\end{dfn}

Remark that this definition is slightly different from the one given in \cite{LuTh}. If $P$ is symplectic, a Poisson action $\sigma$ admits a momentum mapping $J$ if and only if there exists a symplectic action of the symplectic groupoid $\G(G^*)$ on $P$ with anchor $J$: the correspondence is given by $\sigma(g,p)=(gJ(p))p$, for $g\in G$, $p\in P$, see \cite{WX}. Remark that this correspondence demands $G$ to be complete. By applying this result to $P=\G(G)$ we get that the left $G$-action on $\G(G)$ given by $\sigma(g,(k\gamma))=gk\gamma$ is Poisson and admits an equivariant momentum mapping $J(g\gamma)={}^g\gamma$. Moreover it is multiplicative, {\it i.e.} if $(g_1\gamma_1)$ and $(g_2\gamma_2)$ are composable then $J((g_1\gamma_1)(g_2\gamma_2))=J(g_1\gamma_1)J(g_2\gamma_2)$.
More concretely, for any $X\in\la$, the fundamental vector field of the left $G$-action is $\sigma_X(g\gamma)=r_{(g\gamma)^{-1}*}X$ and the right invariant form $X^r_\gamma=r^*_{\gamma^{-1}}X$. We have that, for any $g\gamma\in\G(G)$,
\begin{equation}\label{lem_momap_poi_ac}
\sigma_X(g\gamma)=-\sharpplus\circ [T_{g\gamma} J]^* (X^r_{J(g\gamma)})\;.
\end{equation}

\smallskip

Let us introduce the map
\begin{equation}\label{momentum_map}
\JH = \php \circ J,~~~~~  \JH(g\gamma)=\underline{{}^g\gamma}.
\end{equation}
Being a composition of Poisson submersions, $\JH$ is a Poisson submersion too. In the special case in which $H$ is a Poisson Lie subgroup, $H^\perp$ is a normal subgroup and $G^*/H^\perp\equiv H^*$ is a Poisson Lie group with Lie algebra $\h^\star=\la^\star/\h^\perp$. By using (\ref{properties_dressing}) it can be easily shown that the dressing action of $H$ descends to $G^*/H^\perp$, ${}^h\php(\gamma)\equiv\php({}^h\gamma)$, for any $h\in H$ and $\gamma\in G^*$. Then $\JH$ is an equivariant and multiplicative momentum mapping for the left multiplication by $H$. In the general coisotropic case, $G^*/H^\perp$ is only a Poisson manifold and $\JH$ must be thought as a momentum mapping in a generalized sense, see Corollary \ref{gpd_gpd_red} at the end of this subsection.

Let us consider $\JH^{-1} (\underline{e})=\{g\ga\in G \times G^*:{}^g \ga \in H^\perp\}$, that is a submanifold since $\JH$ is a submersion. Since $\{\underline{e}\}$ is a zero dimensional leaf of $G^*/H^\perp$ then  $\JH^{-1} (\underline{e})$ is coisotropic in $G \times G^*$. Let us show that the left multiplication by $H$ leaves $\JH^{-1} (\underline{e})$ invariant: Lemma \ref{lem_h_hperp_inacca} implies that, if $g\ga \in J_{\mbox{\tiny H}}^{-1} (\underline{e})$, that is if ${}^g \ga \in H^\perp$, then ${}^{h g} \ga \equiv {}^h ({}^g \ga) \in {}^h (H^\perp) = H^\perp$, and then $h g \ga \in J_{\mbox{\tiny H}}^{-1} (\underline{e})$. The left $H$ action is proper and free so that the orbit space is smooth.

\begin{thm} \label{thm_groupoid}
$\mathcal{G} (H\backslash G) = H\backslash \JH^{-1} (\underline{e})$ is a symplectic groupoid that integrates $H\backslash G$.
\end{thm}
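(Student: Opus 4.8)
The plan is to realize $\mathcal{G}(H\backslash G)=H\backslash \JH^{-1}(\underline{e})$ as a Marsden--Weinstein-type reduction and check the groupoid axioms directly, in analogy with the trivial case $\pi_G=0$ recalled at the start of the section. First I would establish that the reduced space is smooth and symplectic: we have already observed that the left $H$-action on $\G(G)$ given by $\sigma(h,(k\gamma))=hk\gamma$ is a Poisson action which is proper and free, that $\JH^{-1}(\underline{e})$ is a coisotropic submanifold of $(\G(G),\pi_+)$ (being the preimage of the zero-dimensional leaf $\underline{e}$ of $G^*/H^\perp$ under the Poisson submersion $\JH$), and that $H$ preserves $\JH^{-1}(\underline{e})$. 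By Theorem \ref{thm_poi_red} applied to $P=\G(G)$, $K=G$, $B=H$, the quotient of the coisotropic submanifold by the coisotropic distribution carries a Poisson structure; the point is that here the coisotropic distribution on $\JH^{-1}(\underline{e})$ is exactly the distribution tangent to the $H$-orbits. This follows from equation (\ref{lem_momap_poi_ac}): the fundamental vector fields $\sigma_X$, $X\in\h$, equal $-\sharpplus\circ[T J]^*(X^r)$, and these span the conormal directions to $\JH^{-1}(\underline{e})$ pushed forward by $\pi_+^\sharp$. Since $\pi_+^{-1}$ is the symplectic form, the reduced Poisson structure on $H\backslash\JH^{-1}(\underline{e})$ is in fact symplectic — this is the standard symplectic reduction statement, using that $\JH^{-1}(\underline{e})$ is coisotropic and the null foliation is the $H$-orbit foliation.

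Next I would write down the groupoid structure. A point of $\JH^{-1}(\underline{e})$ is $(g_1,\gamma_1,\gamma_2,g_2)$ with $\og_1\oga_1=\oga_2\og_2$ and ${}^{g_1}\gamma_1=\gamma_2\in H^\perp$ (in the complete picture: $g\gamma$ with ${}^g\gamma\in H^\perp$). Equivalently $\gamma_1\in H^\perp$ as well, by Lemma \ref{lem_h_hperp_inacca} (the dressing action of $G$ preserves $H^\perp$, so ${}^g\gamma\in H^\perp \iff \gamma\in{}^{g^{-1}}H^\perp=H^\perp$ when $g\in H$ — but in general one needs to be careful; the correct characterization uses that $\alphaG(g\gamma)=g$ is arbitrary in $G$ while $\gamma_1$ may or may not lie in $H^\perp$). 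I would define $\alpha,\beta:H\backslash\JH^{-1}(\underline{e})\to H\backslash G$ by $\alpha[g\gamma]=\underline{\alphaG(g\gamma)}=\underline{g}$ and $\beta[g\gamma]=\underline{\betaG(g\gamma)}=\underline{g^\gamma}$, inherited from the source and target of $\G(G)$; check these descend to the $H$-quotient; then define multiplication by restricting $m_{\mbox{\tiny G}}$ and checking it descends. The key compatibility is that $\JH^{-1}(\underline{e})$ is a \emph{subgroupoid} of $\G(G)$: if $(g\gamma)$ and $(g'\gamma')$ are composable with ${}^g\gamma,{}^{g'}\gamma'\in H^\perp$, then the product $(g\gamma\gamma')$ has ${}^g(\gamma\gamma')={}^g\gamma\,{}^{g^\gamma}\gamma'$ by (\ref{properties_dressing}); since $g^\gamma$ is the target which equals $g'$ (up to $H$), one gets ${}^g(\gamma\gamma')\in H^\perp\cdot{}^{g'}\gamma'\subset H^\perp$ using that $H^\perp$ is a subgroup (Proposition \ref{thm_coi_alg}). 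This is the analogue of checking that $\JH$ is "multiplicative in the generalized sense". Unit and inverse descend similarly.

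Finally I would verify that the descended symplectic form is multiplicative, i.e. $m^*\omega=\pruno^*\omega+\prdue^*\omega$ on the reduced space. I would deduce this from the fact that it holds upstairs on $\G(G)$ (Proposition \ref{sympl_gpd}) together with the compatibility of the reduction with the groupoid multiplication: the reduction of a product of symplectic groupoids by the product coisotropic subgroupoid is the product of reductions, and Lagrangianity of the graph of $m$ descends to the quotient because the null foliation upstairs maps to the null foliation downstairs. Concretely, the graph of the reduced multiplication is the image of (graph of $m_{\mbox{\tiny G}}$) $\cap$ ($\JH^{-1}(\underline{e})\times\JH^{-1}(\underline{e})\times\overline{\JH^{-1}(\underline{e})}$) under the triple quotient map, and one checks this is coisotropic-reduced to a Lagrangian. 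That the unit space $H\backslash\JH^{-1}(\underline{e})_0 = H\backslash G$ carries the quotient Poisson structure, and that $\alpha$ is Poisson, follows because $\alphaG$ is Poisson and $\ph$ is Poisson.

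The main obstacle I anticipate is the bookkeeping in the non-simply-connected reduction, and more essentially the verification that the null foliation of $\JH^{-1}(\underline{e})$ \emph{coincides with} (not merely contains) the $H$-orbit foliation: containment of $H$-orbits in the null foliation is immediate from (\ref{lem_momap_poi_ac}), but the reverse — that there are no extra null directions, equivalently that $\dim\JH^{-1}(\underline{e}) - \dim H = \dim(H\backslash G)$ correctly and the reduced form is nondegenerate — requires a dimension count using that $\JH$ is a submersion onto $G^*/H^\perp$ and $\underline{e}$ is a single point, giving $\dim\JH^{-1}(\underline{e})=2\dim G-(\dim G-\dim H)=\dim G+\dim H$, hence the reduced dimension is $\dim G+\dim H-\dim H=\dim G=2\dim(H\backslash G)$, as it must be for a symplectic groupoid over $H\backslash G$. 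I would make sure this count is clean before asserting nondegeneracy.
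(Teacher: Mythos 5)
Your overall strategy is the paper's own: show that $\JH^{-1}(\underline{e})$ is coisotropic in $(\G(G),\pi_+)$, identify its null foliation with the $H$-orbit foliation so that coisotropic reduction is the $H$-quotient, check that $\JH^{-1}(\underline{e})$ is a subgroupoid whose structure maps descend to $H\backslash \JH^{-1}(\underline{e})$, and obtain multiplicativity of the reduced form by pushing the identity $m^*\omega_+=\pruno^*\omega_+ + \prdue^*\omega_+$ through the quotient submersion. The closure argument via (\ref{properties_dressing}) and the descent of $\betaGH$ are also exactly as in the paper, so on the structural level the two proofs agree.

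The step that fails as written is the one you yourself single out as the crux, namely the verification that the null foliation \emph{equals} the $H$-orbit foliation. Your dimension count takes the codimension of $\JH^{-1}(\underline{e})$ to be $\dim G-\dim H$, but $\dim(G^*/H^\perp)=\dim G^*-\dim H^\perp=\dim G-(\dim G-\dim H)=\dim H$, so in fact $\dim \JH^{-1}(\underline{e})=2\dim G-\dim H$ and the reduced space has dimension $2\dim G-2\dim H=2\dim(H\backslash G)$; your chain instead ends with the assertion $\dim G=2\dim(H\backslash G)$, which is false unless $\dim H=\tfrac{1}{2}\dim G$. The count is in any case unnecessary: the paper proves coincidence (not merely containment) of the two distributions directly, by computing $N^*_{g\gamma}\JH^{-1}(\underline{e})={\rm Im}\,[T_{g\gamma}\JH]^*=\bigl\{[T_{g\gamma}J]^*\circ r^*_{J(g\gamma)^{-1}}X,\ X\in\h\bigr\}$, using that $[T_{\underline e}\php]^*$ is the inclusion $\h\hookrightarrow\mathfrak{g}$ and that $\php\circ r_{J(g\gamma)^{-1}}=\php$; applying $\pi_+^\sharp$ to this space reproduces exactly the fundamental vector fields (\ref{actionofH}) of the left $H$-action, with no counting needed. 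Separately, your parenthetical claim that ${}^{g}\gamma\in H^\perp$ is ``equivalently $\gamma_1\in H^\perp$'' is false for general $g\in G$ (Lemma \ref{lem_h_hperp_inacca} only gives invariance of $H^\perp$ under the dressing action of $H$, not of all of $G$); you partially retract it, but it should simply be removed.
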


\begin{proof} Since $J_{\mbox{\tiny H}}^{-1} (\underline{e})$ is a coisotropic submanifold of $\mathcal{G} (G)$, it admits a symplectic reduction and let $\omega_+^H$ the induced symplectic structure. Let us show that this reduction coincides with the quotient by the left $H$ action. Formula \ref{lem_momap_poi_ac} implies that the fundamental vector fields associated to the left multiplication of $G \times G^*$ by $H$ are given by
\begin{equation}\label{actionofH}
\sigma_{\mbox {\tiny X}} (g \ga) = -\pi_{\mbox{\tiny +}}^\sharp \left[ [T_{g \ga} J]^* \circ r_{J(g \ga)^{-1}}^* X \right], ~~~ \forall X \in \fh,
\end{equation}
where we recall that $J : \mathcal{G}(G) \rightarrow G^* : g\ga \mapsto {}^g \ga$ is the momentum mapping. On the other hand, the coisotropic distribution is defined as
\[
\pi_{\mbox{\tiny +}}^\sharp \left( N^*_{g \ga} J_{\mbox{\tiny H}}^{-1} (\underline{e}) \right),~~~ \forall g \ga \in J_{\mbox{\tiny H}}^{-1} (\underline{e}).
\]
Simple identities allow to write for $g\gamma\in\JH^{-1}(\underline{e})$
\begin{eqnarray*}
N^*_{g \ga} \JH^{-1} (\underline{e}) &=& {\rm Ker} [T_{g \ga} \JH]^\perp ={\rm Im}[T_{g\gamma} \JH]^* =\{[T_{g \ga} J]^* \circ [T_{J(g \ga)} p_{\mbox{\tiny H}^\perp} ]^* X, ~~~ \forall X \in \fh \} \cr
&=& \{[T_{g \ga} J]^* \circ r^*_{J(g\gamma)^{-1}}X, ~~~ \forall X \in \fh \},
\end{eqnarray*}
since $\php\circ r_{J(g\gamma)^{-1}}=\php$ and $[T_e\php]^*:T^*_{\underline e} (G^*/H^\perp)\equiv\fh\rightarrow T^*_eG^*\equiv\la$ is just the inclusion map.

Let us show that $J_{\mbox{\tiny H}}^{-1} (\underline{e})$ is a subgroupoid of $\mathcal{G}(G)$.
Take $g_1\ga_1,g_2\ga_2 \in J_{\mbox{\tiny H}}^{-1} (\underline{e})$ such that $g_2 = g_1^{\ga_1}$. Then $m(g_1\ga_1,g_2\ga_2) = g_1 \ga_1 \ga_2 \in J_{\mbox{\tiny H}}^{-1} (\underline{e})$ since ${}^{g_1} (\ga_1 \ga_2) = {}^{g_1} \ga_1 ~ {}^{g_1^{\ga_1}} \ga_2 \in H^\perp$ because ${}^{g_1^{\ga_1}} \ga_2 = {}^{g_2} \ga_2 \in H^\perp$ by definition. The quotient $H\backslash \JH^{-1} (\underline{e})$ inherits the structure of groupoid. In fact the left $H$ action on $\JH^{-1}(\underline{e})$ and on $G$ defines the relations $S_H\subset \JH^{-1}(\underline{e})\times \JH^{-1}(\underline{e})$ and $R_H\subset G\times G$, respectively; one can show that $(S_H,R_H)$ is a smooth congruence on $\JH^{-1}(\underline{e})$, according to Definition 2.4.5 of \cite{MK}, that induces a unique Lie groupoid structure on the quotient. We will follow a more direct way, by explicitly defining this groupoid structure. Let us denote with $\underline{g}\in H\backslash G$ the equivalence class of $g\in G$ and with $\underline{g}\gamma$ the equivalence class of $g\gamma\in \JH^{-1}(\underline{e})$. The source and target maps are defined as
\begin{eqnarray}
  \alphaGH (\underline{g} \ga) &=& \underline {\alpha_{\mbox{\tiny G}} (g \ga)} = \underline {g} \nonumber \\
  \betaGH (\underline{g}\ga) &=& \underline{\beta_{\mbox{\tiny G}}(g \ga)} = \underline {g^\ga} \;; \nonumber
 \end{eqnarray}
one must check that the definition of $\betaGH$ is correct; indeed $\betaGH (\underline{h g} \ga) = \ul{(h g)^\ga} = \ul{h^{{}^{g} \ga} ~ g^\ga}=\underline {g^\ga}$ since ${}^{g} \ga \in H^\perp$. Given $\underline{g_i} \ga_i \in H \backslash \JH^{-1}(\underline{e})$,
$i = 1, 2$, such as $\alphaGH (\underline{g_2} \ga_2) = \underline{g_2} = \underline {g_1^{\ga_1}} = \betaGH (\underline{g_1} \ga_1)$, we set $\mGH(\underline{g_1}  \ga_1 , \underline{g_2} \ga_2)=
\underline{g_1} \ga_1 \ga_2$. Then $\alphaGH ( \underline{g_1} \ga_1 \ga_2 ) = \underline{g_1} =
\alphaGH ( \underline{g_1} \ga_1 )$ and $\betaGH ( \underline{g_1}  \ga_1 \ga_2) =
\underline{g_1^{\ga_1\ga_2}} = \underline{(h g_2)^{\ga_2}} =\underline{h^{{}^{g_2} \ga_2} ~ {g_2^{\ga_2}}}=
\betaGH (\underline{g_2} \ga_2)$, where $h \in H$ is such that $h g_2 =
g_1^{\ga_1}$, which follows from the condition of composability, and last equality follows, once more, from the condition that ${}^{g_2}\gamma_2\in H^\perp$.

Finally, let us show that the reduced symplectic form $\omega_+^H$ is multiplicative. First, we observe that the restriction of $\omega_+=\pi_+^{-1}$ to $\JH^{-1}(\underline{e})$ is multiplicative, making it a presymplectic groupoid. Then we observe that the quotient map $\ph^{(1)}:\JH^{-1}(\underline{e})\rightarrow \G(H\backslash G)$ induces a submersion $\ph^{(2)}:\JH^{-1}(\underline{e})^{(2)}\rightarrow\G(H\backslash G)^{(2)}$, so that any element in $\Lambda^2 T\G(H\backslash G)^{(2)}$ can be written as $\ph_*^{(2)}V$  for $V\in \Lambda^2 T\JH^{-1}(\underline{e})^{(2)}$. We then have
$$
\langle (\mGH^*-\prGHuno^* - \prGHdue^*)\omega_+^H,\ph_*^{(2)} V\rangle = \langle (m^*-\pruno^*-\prdue^*)\omega_+,V \rangle = 0\;.
$$
\end{proof}
\smallskip

One can think of this reduction as a reduction of a symplectic groupoid action. Let $\G(G^*/H^\perp)$ the symplectic groupoid integrating $G^*/H^\perp$ obtained by the right counterpart of the above procedure. In total analogy with the above construction we have that $\G(G^*/H^\perp)=\{g\underline{\gamma}\in G^*\times G^*/H^\perp, \; g^\gamma\in H\}$. The groupoid structures are $\alphaGsHp(g\underline{\gamma})=\underline{{}^g\gamma}$, $\betaGsHp(g\underline{\gamma})=\underline{\gamma}$, $\mGsHp[(g_1\underline{\gamma_1})(g_2\underline{\gamma_2})]=g_1g_2\underline{\gamma_2}$ for $\underline{\gamma_1}=\underline{{}^{g_2}\gamma_2}$, etc... It is also clear that the isotropy group of $\underline{e}$ is $\G(G^*/H^\perp)^{\underline{e}}_{\underline{e}}=H$. One can easily check that $\JH:\G(G)\rightarrow G^*/H^\perp$ is the anchor for the symplectic action of $\G(G^*/H^\perp)$ on $\G(G)$ given by $(k\underline{\lambda})(g\gamma)=kg\gamma$.

\begin{cor}\label{gpd_gpd_red}
$\G(H\backslash G) = \G(G^*/H^\perp)^{\underline{e}}_{\underline{e}}\backslash\JH^{-1}(\underline{e})$\;.
\end{cor}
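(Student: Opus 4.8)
The plan is to read the right-hand side as the symplectic reduction of a groupoid action and then to identify that reduction with the one of Theorem~\ref{thm_groupoid}. Recall from the preliminaries that a symplectic action of a symplectic groupoid $\G$ on $P$ with anchor $J$ restricts, over a unit $m$, to an action of the isotropy group $\G^m_m$ on $J^{-1}(m)$, and that $\G^m_m\backslash J^{-1}(m)$ (when smooth) is symplectic, carrying the canonical form of the coisotropic reduction of the (coisotropic) fibre $J^{-1}(m)$. I would apply this to the symplectic action of $\G(G^*/H^\perp)$ on $\G(G)$ with anchor $\JH$ described just above: since $\JH=\php\circ J$ is a Poisson map and $\{\underline e\}$ is a $0$-dimensional symplectic leaf of $G^*/H^\perp$, the fibre $\JH^{-1}(\underline e)$ is coisotropic in $\G(G)$, the isotropy group $\G(G^*/H^\perp)^{\underline e}_{\underline e}$ preserves it, and $\G(G^*/H^\perp)^{\underline e}_{\underline e}\backslash\JH^{-1}(\underline e)$ is a symplectic manifold with the canonical form of the coisotropic reduction of $\bigl(\JH^{-1}(\underline e),\omega_+|_{\JH^{-1}(\underline e)}\bigr)$.

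The second step is to identify this quotient with $H\backslash\JH^{-1}(\underline e)$. We have already noted that $\G(G^*/H^\perp)^{\underline e}_{\underline e}=H$; concretely the isomorphism sends $k\in H$ to the element $k\underline e$, and it is a group homomorphism because $\mGsHp[(k_1\underline e)(k_2\underline e)]=k_1k_2\underline e$. Restricting the action $(k\underline\lambda)(g\gamma)=kg\gamma$ to isotropy elements gives $(k\underline e)(g\gamma)=kg\gamma$, which is precisely the left multiplication by $H$ on the $G$-factor appearing in Theorem~\ref{thm_groupoid}. Since $\JH^{-1}(\underline e)$ is a subgroupoid of $\G(G)$ on which this $H$-action is by groupoid automorphisms (as established in the proof of Theorem~\ref{thm_groupoid}), the two quotients $\G(G^*/H^\perp)^{\underline e}_{\underline e}\backslash\JH^{-1}(\underline e)$ and $H\backslash\JH^{-1}(\underline e)$ agree as Lie groupoids.

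Finally, one has to check that the symplectic structures match. In the proof of Theorem~\ref{thm_groupoid} the coisotropic distribution of $\JH^{-1}(\underline e)$ was computed to be spanned precisely by the fundamental vector fields of the left $H$-action, so $\omega_+^H$ is by construction the canonical symplectic form on the coisotropic reduction of $\JH^{-1}(\underline e)$ — the very same form carried by $\G(G^*/H^\perp)^{\underline e}_{\underline e}\backslash\JH^{-1}(\underline e)$ in the first step. Combining this with the conclusion of Theorem~\ref{thm_groupoid} yields $\G(H\backslash G)=H\backslash\JH^{-1}(\underline e)=\G(G^*/H^\perp)^{\underline e}_{\underline e}\backslash\JH^{-1}(\underline e)$ as symplectic groupoids, which is the claim.

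The one point I expect to require care is the last one: verifying that the symplectic reduction of the groupoid action of $\G(G^*/H^\perp)$ at the leaf $\underline e$ and the coisotropic reduction of the anchor fibre $\JH^{-1}(\underline e)$ produce literally the same symplectic form, rather than merely symplectomorphic ones — equivalently, that the isotropy-orbit distribution on $\JH^{-1}(\underline e)$ coincides with the null distribution of $\omega_+|_{\JH^{-1}(\underline e)}$. This is built into the reduction formalism recalled in the preliminaries together with the explicit computation of the coisotropic distribution in the proof of Theorem~\ref{thm_groupoid}, so no genuinely new difficulty arises; everything else is an unwinding of the explicit formulas for $\G(G^*/H^\perp)$ and its action on $\G(G)$.
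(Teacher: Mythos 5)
Your proposal is correct and follows exactly the route the paper intends: the paper states the corollary without proof, but the paragraph preceding it already identifies $\G(G^*/H^\perp)^{\underline{e}}_{\underline{e}}=H$, observes that the groupoid action $(k\underline{\lambda})(g\gamma)=kg\gamma$ restricts on isotropy elements to left multiplication by $H$, and then the claim is immediate from Theorem~\ref{thm_groupoid}. Your extra care about the reduced symplectic forms agreeing (isotropy-orbit distribution versus null distribution of $\omega_+|_{\JH^{-1}(\underline{e})}$) is exactly the content of the coisotropic-distribution computation in the proof of Theorem~\ref{thm_groupoid}, so nothing new is needed.
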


\subsection{The general case}\label{generalcase} If $G$ is neither complete nor simply connected, we have to use
the general form of $\G(G)$ given in Proposition \ref{sympl_grpd_non1conn}. Most of the construction of the
complete and simply connected case can be generalized in a straightforward way, apart from few crucial facts. Instead of Lu's
momentum map, we have to think of $\G(G)$ as a hamiltonian $\G(G^*)$ space, as described in Corollary \ref{grpd_action}. In
particular, the map $\JH$ defined as in (\ref{momentum_map}) is still a Poisson submersion and $\JH^{-1}(\underline{e})$ is a
coisotropic submanifold of $\G(G)$. The coisotropic reduction is therefore well defined. First of all, care has to be taken about smoothness of this quotient. In fact, in the general case the $\G(G^*)$-action of Corollary \ref{grpd_action} does not define a left $G$-action, due to non completeness of dressing vector fields. In particular, formula (\ref{actionofH}) still defines an infinitesimal action of $\mathfrak{h}$, spanning the coisotropic distribution, that cannot
in general be integrated to a group action of $H$. 

We first remark that it can be integrated to a groupoid action. In fact the restriction of the groupoid action $a:\G(G^*){}_\betaGs\!\times_J\G(G)\rightarrow \G(G)$ defined in Corollary \ref{grpd_action} to the subgroupoid $\G(G^*,H^\perp)$ defined in (\ref{subgroupoidofH}) leaves $\JH^{-1}(\underline{e})$ invariant, {\it i.e.} we have a left groupoid action  $a:\G(G^*,H^\perp){}_\betaGs\!\!\times_J\JH^{-1}(\underline{e})\rightarrow\JH^{-1}(\underline{e})$ with anchor $J:\JH^{-1}(\underline{e})\rightarrow H^\perp$. Furthermore, its infinitesimal action is the restriction of the algebroid action of $T^*G^*$ on $\G(G)$, and by repeating the argument in the proof of Theorem \ref{thm_groupoid}, spans the coisotropic distribution of $\JH^{-1}(\underline{e})$. The coisotropic quotient can be obtained as $\G(G^*,H^\perp)\backslash \JH^{-1}({\underline e})$, the orbit space of the groupoid action of $\G(G^*,H^\perp)$. Although $\JH^{-1}(\underline{e})$ is still a subgroupoid of $\G(G)$, it is not obvious a priori that this groupoid action defines a smooth congruence in $\JH^{-1}({\underline e})$ and so a groupoid structure on the quotient.

In order to overcome these problems, we introduce a weaker notion of completeness of dressing vector fields reducing the problem to a quotient by an ordinary free and proper group action. We say that two Lie algebras $(\mathfrak{g}_1,\mathfrak{g}_2)$
are a {\it matched pair of Lie algebras} if there exists a third Lie algebra ${\mathfrak g}_1\bowtie{\mathfrak g}_2$, called the {\it double Lie algebra}, isomorphic to ${\mathfrak g}_1\oplus{\mathfrak g}_2$ as vector space and containing ${\mathfrak g}_1$ and ${\mathfrak g}_2$ as Lie subalgebras. It comes out that $a$ and $b$ defined by $[X,\xi]=b_\xi(X)\oplus a_X(\xi)$ for $X\in {\mathfrak g}_1$, $\xi\in{\mathfrak g}_2 $, define compatible actions of the two Lie algebras on each other. 
Accordingly, we say that two Lie groups $G_1$ and $G_2$ form {\it a matched pair of Lie groups} if there exists a third Lie group $G_1\bowtie G_2$, called the {\it double Lie group}, diffeomorphic to $G_1\times G_2$ and containing $G_1$ and $G_2$ as closed Lie subgroups. It comes out that $G_1$ and $G_2$ act on each other with actions satisfying  compatibility conditions analogue to (\ref{properties_dressing}); viceversa, given such compatible actions then there exists a unique double Lie group (see \cite{Mj}, \cite{LW2}).
 
It is clear that the couples $({\mathfrak g},{\mathfrak g}^\star)$ and $(\mathfrak{h},\mathfrak{h}^\perp)$ form  matched pairs of Lie algebras with the coadjoint actions as compatible actions. We say that $(H,H^\perp)$ are
{\it relatively complete} if the infinitesimal actions of $\mathfrak{h}$ on $H^\perp$ and of $\mathfrak{h}^\perp$ on $H$ via
the dressing vector fields can be integrated in such a way that $(H,H^\perp)$ forms a matched pair of Lie groups, or equivalently that
the finite actions of $H$ and $H^\perp$ satisfy relations (\ref{properties_dressing}). We need the following Lemma concerning the universal cover of $H$, in order to include the case when $G$ is not simply connected.

\begin{lem}\label{lift_dressing_univ_cover}
\begin{itemize}
\item[$i$)] If $(H,H^\perp)$ forms a matched pair of Lie groups then also $(\tilde{H},H^\perp)$ forms a matched pair,
where $\tilde{H}$ is the universal cover of $H=\tilde{H}/Z_H$. 
 
\item[$ii$)] The center $Z_H\subset\tilde{H}$ acts trivially on $H^\perp$ and is a fixed point set of the dressing action of $H^\perp$.

\item[$iii$)] The quotient map $\tilde{H}\rightarrow H$ intertwines the $H^\perp$ action, {\it i.e.} $\tilde{h}^\gamma\rightarrow h^\gamma$ for any lift $\tilde{h}$ of $h\in H$ and $\gamma\in H^\perp$.

\end{itemize} 
\end{lem}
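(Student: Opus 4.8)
The plan is to build the two finite dressing actions for the pair $(\tilde H,H^\perp)$ by lifting the ones already available for $(H,H^\perp)$ through the covering projection $q:\tilde H\to H$, to check the cocycle identities (\ref{properties_dressing}) for the lifted actions, and then to invoke the converse part of the correspondence recalled just above (\cite{Mj},\cite{LW2}), which produces the double Lie group $\tilde H\bowtie H^\perp\cong\tilde H\times H^\perp$ --- that is exactly $i$). Statement $iii$) will fall out of the construction of the lifted action, and half of $ii$) out of the construction of the $\tilde H$-action on $H^\perp$.

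The left action of $\tilde H$ on $H^\perp$ is simply the pull-back ${}^{\tilde h}\gamma:={}^{q(\tilde h)}\gamma$ of the left dressing action of $H$; it is smooth, integrates the dressing action of $\mathfrak h={\rm Lie}\,\tilde H$ on $H^\perp$, and $Z_H=\ker q$ acts trivially on $H^\perp$ (it already does so on $H$), which is half of $ii$). For the right action of $H^\perp$ on $\tilde H$ I would apply the lifting criterion for covering spaces to the smooth map $f:\tilde H\times H^\perp\to H$, $f(\tilde h,\gamma)=q(\tilde h)^\gamma$. The key point is that $e\in H$ is a \emph{fixed point} of the right dressing action of $H^\perp$: reading $e\gamma=\gamma$ in $H\bowtie H^\perp$ and using uniqueness of the factorization gives ${}^e\gamma=\gamma$ and $e^\gamma=e$. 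Since $\pi_1(\tilde H\times H^\perp)$ is generated by loops of the form $(\text{const},\gamma(t))$ and $f$ sends each such loop to the constant loop at $e$, we get $f_*\pi_1(\tilde H\times H^\perp)=1$; hence $f$ lifts uniquely to a smooth $\tilde f:\tilde H\times H^\perp\to\tilde H$ normalized by $\tilde f(e,e)=e$. Writing $\tilde h^\gamma:=\tilde f(\tilde h,\gamma)$, uniqueness of lifts out of the connected manifolds $\tilde H\times H^\perp$ and $\tilde H\times H^\perp\times H^\perp$ forces $e^\gamma=e$, $\tilde h^e=\tilde h$ and $(\tilde h^{\gamma_1})^{\gamma_2}=\tilde h^{\gamma_1\gamma_2}$, so we do obtain a right $H^\perp$-action on $\tilde H$; by construction $q(\tilde h^\gamma)=q(\tilde h)^\gamma$, which is $iii$), and its fundamental vector fields are $q$-related to the dressing vector fields of $\mathfrak h^\perp$ on $H$, so the action integrates the dressing action on $\tilde H$.

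To complete $ii$): for $z\in Z_H$, $z^\gamma$ covers $q(z)^\gamma=e^\gamma=e$, so $z^\gamma\in Z_H$; as $Z_H$ is discrete, $H^\perp$ is connected and $z^e=z$, the map $\gamma\mapsto z^\gamma$ is constant, i.e.\ $z^\gamma=z$. For $i$) there remain the two relations (\ref{properties_dressing}) for $(\tilde H,H^\perp)$. The relation ${}^g(\gamma_1\gamma_2)={}^g\gamma_1\,{}^{g^{\gamma_1}}\gamma_2$ with $g\in\tilde H$ takes values in $H^\perp$ and, once the $\tilde H$-action on $H^\perp$ and $iii$) are unwound, is literally the relation already valid for $(H,H^\perp)$. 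The relation $(g_1g_2)^\gamma=g_1^{\,{}^{g_2}\gamma}\,g_2^\gamma$, $g_i\in\tilde H$, is an identity of continuous maps $\tilde H\times\tilde H\times H^\perp\to\tilde H$; applying $q$, and using $iii$), the relation for $(H,H^\perp)$ and that $q$ is a homomorphism, shows that both sides descend to the same map into $H$ and agree at $(e,e,e)$, hence coincide by uniqueness of lifts. Then \cite{Mj},\cite{LW2} supply the double Lie group and $i$) follows. The only substantive step is the existence of the lift $\tilde f$; it rests entirely on $e$ being fixed by the $H^\perp$-action on $H$ --- this is what lets the construction go through even though $H^\perp$, unlike $\tilde H$, need not be simply connected.
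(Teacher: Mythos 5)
Your argument is correct, and it hinges on exactly the same key fact as the paper's proof: the identity $e\in H$ is a fixed point of the right dressing action of the connected group $H^\perp$, which is what makes the action liftable to the universal cover. The difference is in how the lift and its properties are established. The paper works with the concrete path model of $\tilde H$: an element is a homotopy class $[h]$ of paths from $e$, the lifted action is given explicitly by $[h]^\gamma=[t\mapsto h(t)^\gamma]$, the $\tilde H$-action on $H^\perp$ by ${}^{[h]}\gamma={}^{h(1)}\gamma$, and the one nontrivial compatibility relation $([h_1][h_2])^\gamma=[h_1]^{{}^{[h_2]}\gamma}\,[h_2]^\gamma$ is verified by exhibiting the explicit homotopy $F(s,t)=h_1(t)^{{}^{h_2(t+s(1-t))}\gamma}$ between the two representing paths. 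You instead produce the lifted action abstractly from the lifting criterion applied to $f(\tilde h,\gamma)=q(\tilde h)^\gamma$ --- the computation $f_*\pi_1(\tilde H\times H^\perp)=1$ is precisely where the fixed point enters --- and then every identity (the action axioms, both relations (\ref{properties_dressing}), and the triviality of the $H^\perp$-action on $Z_H$) follows uniformly from uniqueness of lifts out of connected spaces that agree at a single point. What your route buys is that the only nontrivial verification in the paper, the explicit homotopy, is replaced by a mechanical uniqueness argument; what the paper's route buys is an explicit formula for the lifted action on path classes, which makes points $ii$) and $iii$) immediate by inspection (in your version they are equally easy, but come from the intertwining property $q\circ\tilde f=f$ rather than from a formula). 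Both proofs are complete and rest on the same geometric content.
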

\begin{proof}
Let us prove point $i$). Any $\tilde{h}\in\tilde{H}$ can be seen as the equivalence class $[h]$ of a path $h:[0,1]\rightarrow H$, with $h(0)=e$,  with respect to homotopies preserving end points. Since $H^\perp$ is connected and its action on $H$ preserves the identity, for any $\gamma\in H^\perp$ and $[h]\in\tilde{H}$, define the action of $H^\perp$ on $\tilde{H}$ as $[h]^\gamma=[h^\gamma]$, where
$h^\gamma(t)=h(t)^\gamma$, and the action of $\tilde{H}$ on $H^\perp$ as ${}^{[h]}\gamma={}^{h(1)}\gamma$. It is immediate to
check that they are well defined and that ${}^{[h]}(\gamma_1\gamma_2)={}^{[h]}\gamma_1 {}^{[h]^{\gamma_1}}\gamma_2$. In order to check that $([h_1][h_2])^\gamma=[h_1]^{{}^{[h_2]}\gamma}[h_2]^\gamma$ we have to prove that the paths $t\rightarrow h_1(t)^{ {}^{h_2(t)}\gamma}$
and $t\rightarrow h_1(t)^{{}^{h_2(1)}\gamma}$ are homotopic. This can be shown by using the homotopy $F(s,t)=h_1(t)^{{}^{h_2(t+s(1-t))} \gamma}$. 

In order to get point $ii$), we observe that $Z_H=\pi_1(H)$ is realized as homotopy classes of loops and its action on $H^\perp$ is trivial; the action of $H^\perp$ on $Z_H$ is trivial since $H^\perp$ is connected to the identity so that any loop $z$ is homotopic to $z^\gamma$. Finally, point $iii$) can be directly verified once one realizes that the quotient $\tilde{H}\rightarrow H$ is realized as $[h]\rightarrow h(1)$.  
\end{proof}

Let $\tilde{H}\bowtie H^\perp$ be the double Lie group with the product rule given by $\tilde{h}\gamma={}^{\tilde{h}}\gamma \tilde{h}^\gamma$, for $\tilde{h}\in \tilde{H}$ and $\gamma\in H^\perp$.

\smallskip
\begin{thm}
\label{thm_groupoid_general} If $(H,H^\perp)$ are relatively complete then the left groupoid action  $a:\G(G^*,H^\perp)$ ${}_\betaGs\!\!\times_J\JH^{-1}(\underline{e})\rightarrow\JH^{-1}(\underline{e})$ is equivalent to a free and proper action of $H$ and  $H\backslash\JH^{-1}(\underline{e})$ is a symplectic groupoid that integrates $H\backslash G$.
\end{thm}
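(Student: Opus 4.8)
The plan is to reduce the groupoid-action quotient $\G(G^*,H^\perp)\backslash\JH^{-1}(\underline e)$ to a genuine group quotient by exploiting relative completeness, and then transport the proof of Theorem \ref{thm_groupoid} essentially verbatim. First I would upgrade the groupoid action: since $(H,H^\perp)$ is a matched pair, Lemma \ref{lift_dressing_univ_cover} gives that $(\tilde H,H^\perp)$ is a matched pair too, and one forms the double Lie group $\tilde H\bowtie H^\perp$ with $\tilde h\gamma={}^{\tilde h}\gamma\,\tilde h^\gamma$. Using the global decomposition of $\tilde H\bowtie H^\perp$, the left groupoid action $a:\G(G^*,H^\perp){}_\betaGs\!\times_J\JH^{-1}(\underline e)\to\JH^{-1}(\underline e)$ can be rewritten: on an element $[\tilde g_1,\gamma_1,\gamma_2,\tilde g_2]$ with $\gamma_2\in H^\perp$ (which is forced by the fiber condition $\JH=\underline e$), acting by $(\tilde k_1,\lambda_1,\lambda_2,\tilde k_2)$ produces $[\tilde k_1\tilde g_1,\gamma_1,\lambda_2,\tilde k_2\tilde g_2]$. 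The key point is that here $\tilde k_1,\tilde k_2$ range over $\phi_{\tilde H}(\tilde H)$ and, up to the groupoid isotropy, the orbits of this action coincide with the orbits of the left $\tilde H$-multiplication; so the quotient is $\tilde H\backslash\JH^{-1}(\underline e)$. Since $G=\tilde G/Z$ and $Z_H$ acts on everything compatibly (Lemma \ref{lift_dressing_univ_cover}(ii)--(iii)), this descends to the quotient by $H=\tilde H/Z_H$ acting by left multiplication, which is free (left multiplication in a group) and proper ($H$ closed in $G$, hence the action on $G\times G^*$ is proper, and $\JH^{-1}(\underline e)$ is an invariant closed submanifold).

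Once the quotient is identified with $H\backslash\JH^{-1}(\underline e)$ for a free proper $H$-action, I would verify that the reduced object is a symplectic groupoid exactly as in Theorem \ref{thm_groupoid}. Concretely: (a) $\JH=\php\circ J$ is a Poisson submersion (composition of Poisson submersions, using that $\php$ is Poisson by Theorem \ref{thm_poi_red}), so $\JH^{-1}(\underline e)$ is a coisotropic submanifold of $\G(G)$ because $\{\underline e\}$ is a zero-dimensional symplectic leaf; (b) the infinitesimal $\mathfrak h$-action (\ref{actionofH}) spans the coisotropic distribution $\pi_+^\sharp(N^*\JH^{-1}(\underline e))$ — this is precisely the computation of $N^*_{g\gamma}\JH^{-1}(\underline e)$ carried out in the proof of Theorem \ref{thm_groupoid}, and it is valid here because it is infinitesimal and the dressing vector fields are always defined; (c) $\JH^{-1}(\underline e)$ is a subgroupoid of $\G(G)$, using the intertwining relations (\ref{properties_dressing}) which now hold for $(H,H^\perp)$ by the relative-completeness hypothesis; (d) the $H$-action is compatible with the subgroupoid structure, so the quotient inherits source, target, multiplication exactly as written in Theorem \ref{thm_groupoid}; (e) the reduced symplectic form $\omega_+^H$ is well-defined on the coisotropic quotient and is multiplicative, by the same argument pushing the multiplicativity identity $(m^*-\pruno^*-\prdue^*)\omega_+=0$ through the submersions $\ph^{(1)},\ph^{(2)}$.

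The main obstacle is step (a) of the first paragraph: showing that the $\G(G^*,H^\perp)$-groupoid orbits on $\JH^{-1}(\underline e)$ really agree with the orbits of an $H$-action, i.e.\ that relative completeness lets one "integrate the infinitesimal action (\ref{actionofH}) to a group action of $H$" even though the ambient dressing vector fields on $G$ need not be complete. The point is that one does not need completeness of $G$, only that the restricted dressing actions of $H$ and $H^\perp$ on each other integrate as a matched pair; then $\tilde H\bowtie H^\perp$ exists, acts on $\JH^{-1}(\underline e)$, and the $H^\perp$-factor is absorbed into the groupoid isotropy, leaving a clean $\tilde H$- (hence $H$-) action. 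I would make this precise by checking that for $[\tilde g_1,\gamma_1,\gamma_2,\tilde g_2]\in\JH^{-1}(\underline e)$ the assignment $\tilde h\cdot[\tilde g_1,\gamma_1,\gamma_2,\tilde g_2]=[\phi_{\tilde H}(\tilde h)\tilde g_1,\gamma_1,\gamma_2,\phi_{\tilde H}(\tilde h)\tilde g_2]$ (which, written multiplicatively, is just $\tilde h\cdot(\tilde g\gamma)=\tilde h\tilde g\gamma$ in $D$) is well-defined into $\JH^{-1}(\underline e)$ — the stability ${}^{\tilde h}(\text{leaf through }\gamma_2)\subset H^\perp$ following from Lemma \ref{lem_h_hperp_inacca} — that it differs from the groupoid action only by the $\G(G^*,H^\perp)$-isotropy at the relevant base point, and that it is free and proper. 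Everything else is a transcription of the complete simply connected case.
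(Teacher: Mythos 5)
Your overall strategy coincides with the paper's: use relative completeness to build the double $\tilde{H}\bowtie H^\perp$, convert the $\G(G^*,H^\perp)$-groupoid action on $\JH^{-1}(\underline{e})$ into an honest $H$-action, and then transcribe the proof of Theorem \ref{thm_groupoid}. Steps (a)--(e) of your second paragraph are exactly what the paper does. However, the explicit formula you give for the $H$-action in your final paragraph is wrong, and the error sits precisely on the point where the relative-completeness hypothesis must be invoked. The assignment $\tilde{h}\cdot[\tilde{g}_1,\gamma_1,\gamma_2,\tilde{g}_2]=[\phi_{\tilde{H}}(\tilde{h})\tilde{g}_1,\gamma_1,\gamma_2,\phi_{\tilde{H}}(\tilde{h})\tilde{g}_2]$ does not land in $\Omega$: the defining relation $\og_1\oga_1=\oga_2\og_2$ forces the third and fourth slots to change by a dressing transformation, since $\overline{h}\,\oga_2=\overline{{}^{h}\gamma_2}\;\overline{h^{\gamma_2}}$. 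The correct action is $[\phi_{\tilde{H}}(\tilde{h})\tilde{g}_1,\gamma_1,{}^{\tilde{h}}\gamma_2,\phi_{\tilde{H}}(\tilde{h}^{\gamma_2})\tilde{g}_2]$, i.e.\ the groupoid action of the element $(\phi_{\tilde{H}}(\tilde{h}),\gamma_2,{}^{\tilde{h}}\gamma_2,\phi_{\tilde{H}}(\tilde{h}^{\gamma_2}))$. Being able to form the finite transformations $\tilde{h}^{\gamma_2}$ and ${}^{\tilde{h}}\gamma_2$ is exactly what relative completeness buys; with your formula no hypothesis beyond closedness of $H$ would be needed, which should have been a warning sign. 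Relatedly, the claim that naive left multiplication ``differs from the groupoid action only by the isotropy'' cannot be repaired, because naive left multiplication is not even a map into $\Omega$.

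The second missing ingredient is the verification that $(\phi_{\tilde{H}}(\tilde{h}),\gamma_2,{}^{\tilde{h}}\gamma_2,\phi_{\tilde{H}}(\tilde{h}^{\gamma_2}))$ actually belongs to $\G(G^*,H^\perp)$, i.e.\ that the abstract matched-pair factorization $\tilde{h}\gamma={}^{\tilde{h}}\gamma\,\tilde{h}^{\gamma}$ in $\tilde{H}\bowtie H^\perp$ is compatible with the genuine product in $D$. You assert that $\tilde{H}\bowtie H^\perp$ exists and acts, but existence of the abstract double group is not enough: one needs the homomorphism $\psi:\tilde{H}\bowtie H^\perp\rightarrow D$ defined by $\psi(\tilde{h}\gamma)=\phi_1(\phi_{\tilde{H}}(\tilde{h}))\phi_2(\gamma)$, which exists by uniqueness of the homomorphism integrating the inclusion $\h\oplus\h^\perp\hookrightarrow\mathfrak{d}$, and then the computation
\begin{equation}
\phi_1(\phi_{\tilde{H}}(\tilde{h}))\phi_2(\gamma)=\psi(\tilde{h}\gamma)=\psi({}^{\tilde{h}}\gamma)\,\psi(\tilde{h}^{\gamma})=\phi_2({}^{\tilde{h}}\gamma)\,\phi_1(\phi_{\tilde{H}}(\tilde{h}^{\gamma}))\nonumber
\end{equation}
to see that the acting element satisfies the relation defining $\Omega$. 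This is the core of the paper's argument and your outline skips it. Once the action is correctly defined, independence of the lift $\tilde{h}$ via Lemma \ref{lift_dressing_univ_cover}$(ii)$, freeness and properness, and the transcription of Theorem \ref{thm_groupoid} all go through essentially as you describe.
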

\begin{proof}
Let $\tG$ be the universal covering of $G$ so that $G=\tG/Z$ and let $\G(G)=Z\backslash\G(\tG)$ be as described in Proposition \ref{sympl_grpd_non1conn}. Let us recall that $\phi_1:\tG\rightarrow D$ and $\phi_2:G^*\rightarrow D$ are the Lie group homomorphisms entering the definition of $\G(\tG)$. 

We have that $[\tilde{g}_1,\gamma_1,\gamma_2,\tilde{g}_2]\in \JH^{-1}(\underline{e})$ if $\gamma_2\in H^\perp$. Let $\phi_{\tilde H}:\tilde{H}\rightarrow\tG$ be the Lie group homomorphism induced by the injection $\h\rightarrow {\mathfrak g}$. Then we have that $\phi_{\tilde{H}}(Z_H)\subset Z$.
Moreover, due to the uniqueness of the group homomorphism integrating any Lie algebra morphism, we can conclude that $\psi:\tilde{H}\bowtie H^\perp\rightarrow D$ defined as $\psi(\tilde{h}\gamma)=\phi_1(\phi_{\tilde{H}}(\tilde{h}))\phi_2(\gamma)$, for $\tilde{h}\in\tilde{H}$ and $\gamma\in H^\perp$, is a group homomorphism. For any $\tilde{h}\in\tilde{H}$, $\lambda\in H^\perp$ we have that $(\phi_{\tilde{H}}(\tilde{h}),\lambda,{}^{\tilde{h}}\lambda,\phi_{\tilde{H}}(\tilde{h}^\lambda))\in\G(G^*,H^\perp)$. In fact, by using the definition above of $\psi$ we have
\begin{eqnarray}\nonumber
\phi_1(\phi_{\tilde{H}}(\tilde{h}))\phi_2(\lambda)&=&\psi(\tilde{h}\lambda)=
\psi({}^{\tilde h}\lambda\tilde{h}^\lambda)=\psi({}^{\tilde h}\lambda)\psi(\tilde{h}^\lambda)=\cr& =&
\phi_2({}^{\tilde h}\lambda)\phi_1(\phi_{\tilde{H}}(\tilde{h}^{\lambda}))\;.
\end{eqnarray}

So we can define the left $H$-action on $\JH^{-1}(\underline{e})$ by choosing any lift $\tilde h$ of $h\in H$ and letting $$h[\tilde{g}_1,\gamma_1,\gamma_2,\tilde{g}_2]=a\{(\phi_{\tilde{H}}(\tilde{h}),\gamma_2,{}^{\tilde{h}}\gamma_2,\phi_{\tilde{H}}(\tilde{h}^\gamma_2))[\tilde{g}_1,\gamma_1,\gamma_2,\tilde{g}_2]\}=[\phi_{\tilde{H}}(\tilde{h})\tilde{g}_1,\gamma_1,{}^h\gamma_2,\phi_{\tilde{H}}(\tilde{h}^{\gamma_2})\tilde{g}_2]\;.$$ 

The independence on the choice of the lift $\tilde{h}$ is clear since $(\tilde{h}z)^\gamma=\tilde{h}^\gamma z$ for any $z\in Z_H$ due to point $ii$) in Lemma \ref{lift_dressing_univ_cover}.

Under this condition, the coisotropic reduction is obtained as a quotient of the free and proper action of $H$ and so it is a smooth manifold. Moreover, groupoid structures descend to the quotient, as it can be directly verified and everything goes through like in the proof of Theorem \ref{thm_groupoid}.
\end{proof}

\smallskip
In the following we analyze some obvious conditions that imply relative completeness.

\begin{lem}\label{relative_completeness}
If $H^\perp$ is simply connected and $H$ is a Poisson-Lie subgroup, then $(H,H^\perp)$ are relatively complete.
\end{lem}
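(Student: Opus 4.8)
The plan is to show that both infinitesimal dressing actions integrate, and that the integrated actions automatically satisfy the intertwining relations (\ref{properties_dressing}), so that $(H,H^\perp)$ form a matched pair of Lie groups. First I would use the hypothesis that $H$ is a Poisson-Lie subgroup: by the last statement of Lemma \ref{lem_h_hperp_inacca}, the dressing action of $H^\perp$ on $H$ is trivial, i.e. the dressing vector fields $\mathcal{S}_\xi$ for $\xi\in\h^\perp$ vanish identically on $H$. So that side of the matched pair integrates trivially, to the trivial action $h^\gamma=h$ for all $h\in H$, $\gamma\in H^\perp$; this is complete without any further hypothesis. What remains is to integrate the infinitesimal action of $\h$ on $H^\perp$.

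Next I would exploit the fact that, since $H$ is a Poisson subgroup, $\h^\perp$ is an ideal in $\fg^\star$, hence $H^\perp$ is (at least locally) a normal subgroup of $G^*$ and $G^*/H^\perp\cong H^*$ is a Poisson-Lie group with Lie algebra $\h^\star=\fg^\star/\h^\perp$; moreover $(\h,\h^\perp)$ is itself a Lie bialgebra, with double $\h\bowtie\h^\perp$. The infinitesimal dressing action of $\h$ on $H^\perp$ is then exactly the left dressing action of the Poisson-Lie group $H^*$ (with $\h=(\h^\star)^\star$) on its dual $H^\perp$, read through the double $\h\bowtie\h^\perp$. By Lu's results quoted in the excerpt (after Lemma \ref{lem_fund_vec}): for a simply connected dual, $D=G_1\times G_2$ iff the dressing vector fields are complete, and completeness of one side is equivalent to completeness of the other. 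Here the $\h^\perp$-on-$\h$ dressing vector fields of the bialgebra $(\h,\h^\perp)$ are the $\mathcal{S}_\xi$ above, which vanish and are therefore trivially complete; hence the $\h$-on-$H^\perp$ dressing vector fields are complete as well, and since $H^\perp$ is assumed simply connected the global double $\tilde H^*\bowtie H^\perp$ exists and provides the integrated left action ${}^{\tilde h}\gamma$ of $\tilde H^*$ on $H^\perp$. Finally, using Lemma \ref{lift_dressing_univ_cover}(i) (or directly the triviality $\tilde h^\gamma=\tilde h$) one passes from $\tilde H^*$ to $H$: because the $H^\perp$-action on $H$ is trivial, the relations (\ref{properties_dressing}) reduce to $(h_1h_2)^\gamma=h_1^\gamma h_2^\gamma$ (automatic, both sides trivial) and ${}^{h}(\gamma_1\gamma_2)={}^h\gamma_1\,{}^{h}\gamma_2$, which is precisely the statement that ${}^{(\cdot)}(\cdot)$ is an honest action by group automorphisms of $H^\perp$ — and this holds by the Corollary to Lemma \ref{lem_h_hperp_inacca}. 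So $(H,H^\perp)$ is a matched pair of Lie groups, i.e. relatively complete.

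The main obstacle I expect is bookkeeping about which group is simply connected and which dressing vector fields one is claiming to be complete: the hypothesis ``$H^\perp$ simply connected'' must be matched with the side of the $(\h,\h^\perp)$-double whose completeness is being used, and one must be careful that $H$ itself (the ``completing'' partner) need not be simply connected, which is why Lemma \ref{lift_dressing_univ_cover} is invoked to descend from $\tilde H$ to $H$. Once the triviality of the $H^\perp$-on-$H$ dressing is observed, however, this descent is immediate and the rest is just transcribing Lu's completeness equivalence for the sub-bialgebra $(\h,\h^\perp)$.
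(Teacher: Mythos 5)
Your overall strategy coincides with the paper's: use the last statement of Lemma \ref{lem_h_hperp_inacca} to see that the dressing action of $H^\perp$ on $H$ is trivial (so that side integrates for free), and then integrate the $\mathfrak{h}$-action on $H^\perp$ using the hypothesis that $H^\perp$ is simply connected; the paper does exactly this and then simply cites Lemma 4.1 of \cite{Mj} on matched pairs. However, the step by which you obtain completeness of the dressing vector fields of $\mathfrak{h}$ on $H^\perp$ contains a genuine error. The pair $(\mathfrak{h},\mathfrak{h}^\perp)$ is a matched pair of Lie algebras, as stated in Section \ref{generalcase}, but it is \emph{not} a Lie bialgebra: $\mathfrak{h}^\perp\cong(\mathfrak{g}/\mathfrak{h})^\star$ has dimension $\dim\mathfrak{g}-\dim\mathfrak{h}$, so it cannot be the dual of $\mathfrak{h}$ unless $\dim\mathfrak{h}=\tfrac{1}{2}\dim\mathfrak{g}$. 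For the same reason the dual Poisson Lie group of $H^*$ (whose Lie algebra is $\mathfrak{h}^\star=\mathfrak{g}^\star/\mathfrak{h}^\perp$) is $H$, not $H^\perp$, so your object ``$\tilde H^*\bowtie H^\perp$'' conflates two different groups. Consequently Lu's equivalence ``the dressing vector fields of $G$ are complete iff those of $G^*$ are complete'', which is quoted in the paper only for a Poisson Lie group and its dual, is not licensed for the pair $(H,H^\perp)$, and the key completeness claim is left unproved.

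The conclusion is nevertheless correct and can be repaired with tools already in the paper. By the Corollary following Lemma \ref{lem_h_hperp_inacca}, when $H$ is a Poisson subgroup the dressing vector fields of $\mathfrak{h}$ on $H^\perp$ are multiplicative; a multiplicative vector field on a connected, simply connected Lie group is the generator of the one-parameter group of automorphisms integrating the associated derivation of the Lie algebra (here $\mathrm{ad}^*_X$ restricted to $\mathfrak{h}^\perp$), hence complete. This yields an action by automorphisms, and since it is the integration of the restricted coadjoint action it is an action of $H$ itself, not merely of $\tilde H$ --- a point you should not dismiss by citing Lemma \ref{lift_dressing_univ_cover}$(i)$, which passes from a matched pair structure on $(H,H^\perp)$ \emph{up} to $(\tilde H,H^\perp)$ and not in the direction you need. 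Once these two repairs are made, your observation that the intertwining relations (\ref{properties_dressing}) degenerate to ``$H$ acts on $H^\perp$ by automorphisms'' correctly finishes the argument, in the same way the paper's appeal to Lemma 4.1 of \cite{Mj} does.
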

\begin{proof} Due to Lemma \ref{lem_h_hperp_inacca}, the dressing vector fields of $H^\perp$ restricted to $H$ are zero and the action of $H^\perp$ is trivially integrated. Since the hypothesis $ii$) of Lemma 4.1 in \cite{Mj} is obviously satisfied and $H^\perp$ is simply connected, we get the result. \end{proof}

\subsection{An Example: $G=SU(1,1), H=U(1)$.}\label{example_su11}
Let us consider the following double Lie algebra ${\mathfrak d}=s\ell(2,{\mathbb C})$ with pairing $\langle A,B\rangle={\rm Im\,Tr}(AB)$ and
$${\mathfrak g}=\mathfrak{su}(1,1)=\left\{\left(\begin{array}{cc}ia & b\cr b^*&-ia \end{array}\right)\;, a\in{\mathbb R}, b\in{\mathbb C}\right\}~~,$$
$${\mathfrak g}^\star=\mathfrak{sb}(2,{\mathbb C})=\left\{\left(\begin{array}{cc}a & n\cr0&-a \end{array}\right)\;, a\in{\mathbb R}\;, n\in{\mathbb C}\right\}\;.$$
Since the group
$$G=SU(1,1)=\left\{\left(\begin{array}{cc} \alpha&\beta\cr \beta^*& \alpha^*\end{array}\right),\, |\alpha|^2-|\beta|^2=1 \right\}$$
is embedded in $D=SL(2,{\mathbb C})$, formulas (\ref{poisson_tensors}) define a multiplicative Poisson structure on $SU(1,1)$, even if it is not simply connected. The simply connected dual group is
$$G^*=SB(2,{\mathbb C})=\left\{\left(\begin{array}{cc} A&N\cr 0& A^{-1}\end{array}\right),\, A>0, N\in{\mathbb C}\right\}\;.$$
Let us choose as subgroup $H\subset G$ the diagonal $U(1)$, which
is a Poisson-Lie subgroup; then $H^\perp$ is the closed subgroup
of $G^*$ of strictly upper diagonal matrices
$$
H^\perp = \left\{ \left(\begin{array}{cc}1 & N\cr 0&1\end{array}\right)\, ,\, N\in {\mathbb C}\right\}~~~~. 
$$
The quotient
$U(1)\backslash SU(1,1)$ is homeomorphic to the open disk and its quantization has been studied in \cite{Kor}.

Dressing transformations are not complete. An easy way of looking at it is the following. Let $g\in SU(1,1)$ and $\xi\in\mathfrak{g}^\star$: the flux $g_t$ of the dressing vector field corresponding to $\xi$ is given locally by the solution of $g\exp t\xi=\gamma_t g_t$ with $\gamma_t\in G^*$. We see that for $t\in{\mathbb R}$ the equation
$$
\left( \begin{array}{cc}\alpha&\beta\cr\beta^*&\alpha^*\end{array}\right)\left(\begin{array}{cc}1&t\cr0&1\end{array}\right)=\left(\begin{array}{cc}A_t&N_t\cr0&A_t^{-1}\end{array}\right)
\left(\begin{array}{cc}\sigma_t & \tau_t\cr\tau_t^*&\sigma_t^* \end{array}\right)~~~$$
admits in general solutions only for $t<t_0$ (for instance take $\alpha,\beta\in{\mathbb R}$ with $\beta\not=0$). In particular the dressing action of $H^\perp$ on $G$ is not complete. On the contrary, one easily computes that
$$
\left( \begin{array}{cc}\alpha&0\cr0&\alpha^*\end{array}\right)\left(\begin{array}{cc}1&b\cr0&1\end{array}\right)=\left(\begin{array}{cc}1&\alpha^2 b\cr0&1\end{array}\right)
\left(\begin{array}{cc}\alpha & 0\cr 0& \alpha^* \end{array}\right)~~~,$$
so that $(H,H^\perp)$ are relatively complete. Then Theorem \ref{thm_groupoid_general} produces a symplectic groupoid for the quotient Poisson structure on the disc. A subfamily of the whole family of covariant Poisson discs given in \cite{Kles} can be described in a similar manner.

\medskip

\subsection{Comparison with the construction in \cite{Lu2007}} In \cite{Lu2007} the most general Poisson homogeneous spaces of Poisson Lie groups are considered. Drinfeld in \cite{Dr} showed that Poisson structures on $H\backslash G$, such that the right $G$ action is Poisson, are naturally associated to lagrangian subalgebras ${\mathfrak l}\subset{\mathfrak d}$. The case of $H$ coisotropic, considered in this paper, corresponds to ${\mathfrak l}={\mathfrak h}\oplus {\mathfrak h}^\perp$.   
Let us assume that $i$) $G$ is a closed subgroup of any $D$ integrating $\mathfrak d$ (even not simply connected); $ii$) $H=L_H\cap G$, where $L_H$ is the connected subgroup of $D$ integrating $\mathfrak l$; $iii$) the infinitesimal action of $\mathfrak l$ on $G$ is integrated to a finite action of $L_H$. Then a Poisson groupoid for any
Poisson homogeneous space, even non embeddable, is constructed. Moreover
conditions for the Poisson structure to be non degenerate are
given. 

If we restrict to the embeddable homogeneous spaces, that we consider in the present paper, and to the complete case, the
groupoid is described as $G\times_H H^\perp$, the fibred product with respect to the right $H$-action on $G\times H^\perp$ given by
$(g,\gamma)h=(gh,{}^{h^{-1}}\gamma)$. In this case the Poisson structure is non degenerate. We can describe our symplectic
groupoid $\G(H\backslash G)$ as a fibred product $H^\perp\times_H G$ with respect to the left action $h(\gamma,g)=({}^h\gamma,hg)$,
via the correspondence $(\underline{g}\gamma)\in\JH^{-1}(\underline{e})\rightarrow [{}^g\gamma,g]\in  H^\perp\times_H G$. It is then clear that the reduction procedure in \cite{Lu2007} coincides with the right version of our procedure.

In the non complete case the two constructions are different. The groupoid in \cite{Lu2007} is described as $\Gamma=G\times_H
L_H/H$, where $L_H\subset D$ is the connected subgroup integrating ${\mathfrak l}={\mathfrak h}\oplus {\mathfrak
h}^\perp\subset\mathfrak{d}$ and $L_H/H$ is the homogeneous space by right quotient of $H\subset L_H$. The hypothesis that the action of $\mathfrak{l}$ on $G$ can be integrated implies that the dressing vector fields
corresponding to $\mathfrak{h}^\perp$ are complete on $G$. The Poisson structure is not known to be symplectic in general.

In order to realize that this construction is, in general, different from ours, it is enough to look at the trivial case, where
$H=\{e\}$ and $H^\perp=G^*$. This case obviously satisfies the relative completeness requirement: the symplectic groupoid
described in Theorem \ref{thm_groupoid_general} is obviously the unreduced one $\G(G)$. The construction in \cite{Lu2007} gives
$\Gamma=G\times G^*$, which is not a groupoid. In fact, the requirement that the action of $\mathfrak{l}={\mathfrak g}^*$
integrates is equivalent to completeness.

A less trivial case is given by the example discussed in subsection \ref{example_su11}. In that case, $L_H= H\bowtie H^\perp$. We saw in
fact that the infinitesimal dressing action of $H^\perp$ is complete only when restricted to $H$,
where it is trivial, and is not complete on the whole $SU(1,1)$.

\bigskip\bigskip

\section{Concluding remarks}

In the complete case, the symplectic groupoid $\G(H\backslash G)$ described in the previous section has the source fibre isomorphic to $H^\perp$, so it will be the unique source simply connected groupoid integrating $H\backslash G$ only if $H^\perp$ is simply connected. Moreover, since we are interested in the problem of quantization a more explicit description will be necessary. In particular it is natural to ask when it is symplectomorphic to $T^*(H\backslash G)$ with the canonical symplectic form. This problem will be addressed elsewhere, while in this section we will sketch a construction of a diffeomorphism between the symplectic groupoid and the cotangent bundle, that can be considered a first step in this direction.

In the complete case, the symplectic groupoid $\G(H\backslash G)$ can be described as the fibre bundle $H^\perp \times_H G$,
associated with the principal bundle $G\rightarrow H\backslash G$ and the dressing action of $H$ on $H^\perp$. Since the cotangent
bundle is the bundle associated to the coadjoint action on $\mathfrak{h}^\perp$, let us suppose that there exists a
diffeomorphism $s_H:{\mathfrak h}^\perp\rightarrow H^\perp$ that intertwines the coadjoint action of $H$ with the dressing action,
{\it i.e.} $s_H(\Ad^*_h\xi)={}^hs_H(\xi)$, for any $h\in H$, $\xi\in{\mathfrak h}^\perp$. We then have a fibre bundle
isomorphism that we describe as follows. Let us consider any trivialization of the principal bundle $G\rightarrow H\backslash
G$ given by the local sections $g_i:U_i\rightarrow G$ and transition functions $h_{ij}: U_i\cap U_j\rightarrow H$, such that
$g_j(x)=h_{ji}(x)g_i(x)$ for any $x\in U_i\cap U_j$. Then there exist local diffeomorphisms $S_i: \G|_{U_i} \rightarrow T^*
H\backslash G|_{U_i}$ given by:
\begin{equation}
 S_i(x \ga)= \Ad^*_{g_i(x)^{-1}}(s_H^{-1}({}^{g_i(x)} \ga)) \in \Ad^*_{g_i(x)^{-1}}{\mathfrak h}^\perp=T^*_x(H\backslash G) \;.\nonumber
\end{equation}
Since $s_H$ intertwines coadjoint and dressing action of $H$, $S_i=S_j$ on $U_i\cap U_j$ so that a global diffeomorphism $S:\G(H\backslash G)\rightarrow T^*(H\backslash G)$ is defined. Since the source map, when transported to the cotangent bundle, coincides with the bundle projection, the symplectic structure cannot be the canonical one, unless the Poisson structure on $H\backslash G$ is trivial.

Let us briefly see a class of examples where to apply the above construction. When $H$ is a Poisson Lie group and $H^\perp$ is of exponential type, {\it i.e.} $H^\perp=\exp{\mathfrak h}^\perp$, we can choose $s_H=\exp$.  In fact, since $\mathfrak{h}^\perp$ in an ideal, the coadjoint action of $H$ on $H^\perp$ is a Lie algebra morphism: $Ad_X^* \left( [\xi, \eta] \right) = [Ad_X^*\xi, \eta] + [\xi,Ad_X^* \eta]$, for all $X  \in \fh, \xi,\eta \in \fh^\perp$. Then due to the uniqueness of the group automorphism that integrates the coadjoint action we conclude that ${}^h\exp\xi=\exp\Ad^*_h\xi$.

While we we were finishing this paper, it appeared on the net paper \cite{stefanini} that contains very close results. It is shown that the Poisson action of a complete Poisson Lie group $H$ on an integrable Poisson manifold $P$ can be lifted to a groupoid action of $\G(H^*)$ on $\G(P)$; this fact allows one to obtain the groupoid integrating $P/H$ by symplectic reduction. The result coincides with Theorem \ref{thm_groupoid} in our paper when we take $P=G$ as a Poisson Lie group and $H\subset G$ a Poisson Lie subgroup. It would be nice to extend the results of \cite{stefanini} to the most general coisotropic reduction described in Theorem \ref{thm_poi_red} in order to get our Theorem \ref{thm_groupoid} in full generality as a particular case of this reduction scheme.

\end{document}